\documentclass[oneside,english]{amsart}
\usepackage[T1]{fontenc}
\usepackage[latin9]{inputenc}
\usepackage{geometry}
\geometry{verbose,tmargin=2.5cm,bmargin=2.5cm,lmargin=2.5cm,rmargin=2.5cm}
\usepackage{amsthm}
\usepackage{amssymb}

\makeatletter
%%%%%%%%%%%%%%%%%%%%%%%%%%%%%% Textclass specific LaTeX commands.
\numberwithin{equation}{section}
\numberwithin{figure}{section}
 \theoremstyle{definition}
 \newtheorem*{defn*}{\protect\definitionname}
\theoremstyle{plain}
\newtheorem{thm}{\protect\theoremname}
  \theoremstyle{plain}
  \newtheorem{lem}[thm]{\protect\lemmaname}
  \theoremstyle{plain}
  \newtheorem{cor}[thm]{\protect\corollaryname}
  \theoremstyle{plain}
  \newtheorem{prop}[thm]{\protect\propositionname}
  \theoremstyle{remark}
  \newtheorem{rem}[thm]{\protect\remarkname}
  \theoremstyle{definition}
  \newtheorem{example}[thm]{\protect\examplename}

%%%%%%%%%%%%%%%%%%%%%%%%%%%%%% User specified LaTeX commands.
\usepackage{amsfonts}
\usepackage{xypic}
\newcommand{\A}{{\mathbb A}}

\makeatother

\usepackage{babel}
  \providecommand{\corollaryname}{Corollary}
  \providecommand{\definitionname}{Definition}
  \providecommand{\examplename}{Example}
  \providecommand{\lemmaname}{Lemma}
  \providecommand{\propositionname}{Proposition}
  \providecommand{\remarkname}{Remark}
\providecommand{\theoremname}{Theorem}

\begin{document}

\title[Cylinders in del Pezzo fibrations]{Cylinders in del Pezzo fibrations}

\author{Adrien Dubouloz}

\address{IMB UMR5584, CNRS, Univ. Bourgogne Franche-Comté, F-21000 Dijon,
France.}

\author{Takashi Kishimoto}

\email{adrien.dubouloz@u-bourgogne.fr}

\address{Department of Mathematics, Faculty of Science, Saitama University,
Saitama 338-8570, Japan}

\email{tkishimo@rimath.saitama-u.ac.jp}

\thanks{This project was partially funded by ANR Grant \textquotedbl{}BirPol\textquotedbl{}
ANR-11-JS01-004-01 and Grant-in-Aid for Scientific Research of JSPS
No. 15K04805. The research was done during a visit of the first author
at the University of Saitama, The authors thank this institution for
its generous support and the excellent working conditions offered. }

\subjclass[2000]{14E30; 14R10; 14R25; }

\keywords{del Pezzo fibrations, cylinders, affine three-space. }
\begin{abstract}
We show that a del Pezzo fibration $\pi:V\rightarrow W$ of degre
$d$ contains a vertical open cylinder, that is, an open subset whose
intersection with the generic fiber of $\pi$ is isomorphic to $Z\times\mathbb{A}_{K}^{1}$
for some quasi-projective variety $Z$ defined over the function field
$K$ of $W$, if and only if $d\geq5$ and $\pi:V\rightarrow W$ admits
a rational section. We also construct twisted cylinders in total spaces
of threefold del Pezzo fibrations $\pi:V\rightarrow\mathbb{P}^{1}$
of degree $d\leq4$. 
\end{abstract}
\maketitle

\section*{Introduction}

An $\mathbb{A}_{k}^{r}$-\emph{cylinder} in a normal algebraic variety
$X$ defined over a field $k$ is a Zariski open subset $U$ isomorphic
to $Z\times\mathbb{A}_{k}^{r}$ for some algebraic variety $Z$ defined
over $k$. Complex projective varieties containing cylinders have
recently started to receive a lot of attention in connection with
the study of unipotent group actions on affine varieties. Namely,
it was established in \cite{KPZ1,KPZ2} that the existence of a nontrivial
action of the additive group $\mathbb{G}_{a,\mathbb{C}}$ on the affine
cone associated with a polarized projective variety $(X,H)$ is equivalent
to the existence in $X$ of a so-called $H$-polar cylinder, that
is, an $\mathbb{A}^{1}$-cylinder $U=X\setminus\mathrm{Supp}(D)$
for some effective divisor $D\in\left|mH\right|$, $m\geq1$. 

Since a complex projective variety $X$ containing a cylinder is in
particular birationally ruled, if it exists, the output $V$ of a
Minimal Model Program run on $X$ is necessarily is a Mori fiber space,
that is, a projective variety with $\mathbb{Q}$-factorial terminal
singularities equipped with an extremal contraction $\pi:V\rightarrow W$
over a lower dimensional normal projective variety $W$. In case where
$\dim W=0$, $V$ is a Fano variety of Picard number one. If $\dim V=2$
then $V$ isomorphic to $\mathbb{P}^{2}$ hence contains many (anti-canonically
polar) cylinders. In higher dimension, several families of examples
of Fano varieties of dimension $3$ and $4$ and Picard number one
admitting (anti-canonically polar) cylinders have been constructed
\cite{KPZ4,PZ14,PZ15}, but a complete classification is still far
from being known. 

In this article, we consider the question of existence of cylinders
in other possible outputs of Minimal Model Programs: del Pezzo fibrations
$\pi:V\rightarrow W$, which correspond to the case where $\dim W=\dim V-2$.
The general closed fibers of such fibrations are smooth del Pezzo
surfaces, the degree $\deg(V/W)$ of the del Pezzo fibration $\pi:V\rightarrow W$
is then defined as the degree of such a general fiber. Del Pezzo surfaces
contain many cylinders, indeed being isomorphic to $\mathbb{P}^{1}\times\mathbb{P}^{1}$
or the blow-up of $\mathbb{P}^{2}$ in at most eight points in general
position, they even have the property that every of their closed points
admits an open neighborhood isomorphic to $\mathbb{A}^{2}$. One could
therefore expect that given a del Pezzo fibration $\pi:V\rightarrow W$,
some suitably chosen families of ``fiber wise'' cylinders can be
arranged into a ``relative'' cylinder with respect to $\pi:V\rightarrow W$,
more precisely a so-called \emph{vertical cylinder}:
\begin{defn*}
Let $f:X\to Y$ be a morphism between normal algebraic varieties defined
over a field $k$ and let $U\simeq Z\times\mathbb{A}_{k}^{r}$ be
an $\mathbb{A}_{k}^{r}$-cylinder inside $X$. We say that $U$ is
\emph{vertical with respect to} $f$ if the restriction $f|_{U}$
factors as 
\[
f\mid_{U}=h\circ\mathrm{pr}_{Z}:U\simeq Z\times\mathbb{A}_{k}^{r}\stackrel{\mathrm{pr}_{Z}}{\longrightarrow}Z\stackrel{h}{\longrightarrow}Y
\]
for a suitable morphism $h:Z\rightarrow Y$. Otherwise, we say that
$U$ is a \emph{twisted} $\mathbb{A}_{k}^{r}$-\emph{cylinder} with
respect to $f$. 
\end{defn*}
For dominant morphisms $f:X\to Y$ the existence inside $X$ of an
$\mathbb{A}_{k}^{r}$-cylinder vertical with respect to $f$ translates
equivalently into that of an $\mathbb{A}_{K}^{r}$-cylinder inside
the fiber $X_{\eta}$ of $f$ over the generic point $\eta$ of $Y$,
considered as a variety defined over the function field $K$ of $Y$
(see Lemma \ref{lem:Vertical-gen} below). It follows in particular
that, for instance, a del Pezzo fibration $\pi:V\rightarrow W$ of
degree $9$ whose generic fiber $V_{\eta}$ is a Severi-Brauer surface
without rational point over the function field of $W$ cannot contain
any vertical $\mathbb{A}_{\mathbb{C}}^{1}$-cylinder. Our first main
result is a complete characterization of del Pezzo fibrations admitting
vertical cylinders:
\begin{thm}
\label{thm:dP-Main} A del Pezzo fibration $\pi:V\rightarrow W$ admits
a vertical $\mathbb{A}_{\mathbb{C}}^{1}$-cylinder if and only if
$\deg(V/W)\geq5$ and $\pi:V\rightarrow W$ has a rational section. 
\end{thm}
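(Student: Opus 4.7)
The plan is to first use Lemma~\ref{lem:Vertical-gen} to reduce the theorem to a statement about the generic fiber $S=V_{\eta}$, which is a smooth del Pezzo surface of degree $d$ over $K=\mathbb{C}(W)$. Since $\pi:V\to W$ is a Mori fiber space, the relative Picard rank is one, which gives $\operatorname{Pic}(S)_{\mathbb{Q}}=\mathbb{Q}[-K_{S}]$. The rational section condition translates to $S(K)\neq\emptyset$. Hence the theorem amounts to the intrinsic statement: a smooth del Pezzo surface $S$ of degree $d$ over $K$ with $\rho_{K}(S)=1$ contains a $K$-$\mathbb{A}^{1}$-cylinder if and only if $d\geq 5$ and $S(K)\neq\emptyset$.

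For the sufficiency direction, assuming $d\geq 5$ and $S(K)\neq\emptyset$, I would invoke the classical $K$-rationality theorems of Enriques, Manin, Iskovskikh and Swinnerton-Dyer: any such $S$ is $K$-birational to $\mathbb{P}^{2}_{K}$. A birational map $\varphi:\mathbb{P}^{2}_{K}\dashrightarrow S$ restricts to an isomorphism from a $K$-open subset $U_{0}\subset\mathbb{A}^{2}_{K}\subset\mathbb{P}^{2}_{K}$ (obtained by removing the intersection with the indeterminacy and exceptional loci of $\varphi$) onto a $K$-open subset $U\subset S$. Choosing $U_{0}$ to be the complement of finitely many vertical lines in $\mathbb{A}^{1}_{K}\times_{K}\mathbb{A}^{1}_{K}$, it can be arranged of the form $Z\times_{K}\mathbb{A}^{1}_{K}$ for some affine $K$-curve $Z$.

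For the necessity direction, suppose $S$ contains a $K$-cylinder $U\simeq Z\times_{K}\mathbb{A}^{1}_{K}$. The translation action of $\mathbb{G}_{a,K}$ on the second factor extends to a non-trivial rational $\mathbb{G}_{a,K}$-action on $S$, and the closures of its general orbits form a $K$-pencil $\mathcal{L}$ of rational curves on $S$ parametrized by $Z$. The boundary $D=S\setminus U$ and the general member $F\in\mathcal{L}$ both have classes in $\operatorname{Pic}(S)_{\mathbb{Q}}=\mathbb{Q}[-K_{S}]$. A $K$-rational point on $S$ can be produced by combining the $K$-pencil structure with an equivariant completion of the $\mathbb{G}_{a,K}$-action: a fixed component of the $\mathbb{G}_{a,K}$-action on a smooth $K$-model of $S$ yields, after blowing down, a $K$-point of $S$ which lies on $D$.

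The main obstacle is the degree inequality $d\geq 5$, which I would establish by contradiction for each $d\in\{1,2,3,4\}$. Over $\overline{K}$, the pencil $\mathcal{L}$ splits into one of the standard types of rational pencils on a del Pezzo surface of degree $d$ (pencils of lines, conics, or twisted cubics through a configuration of base points depending on $d$). The Galois action on the corresponding configuration of $(-1)$-curves inside the Picard lattice, viewed as a sublattice of $E_{9-d}$, must stabilize both $\mathcal{L}$ and its base locus. The key combinatorial fact to verify is that no such Galois-stable pencil has a class proportional to $-K_{S}$ when $d\leq 4$: any such stabilizer would fix, in addition to the class of $\mathcal{L}$, at least one further independent class in $\operatorname{Pic}(S_{\overline{K}})$ (coming from the base locus or a fixed component in the boundary of a completion), contradicting $\rho_{K}(S)=1$. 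Making this lattice-theoretic dichotomy precise, $d$ by $d$, is the technical heart of the argument.
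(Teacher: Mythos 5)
Your reduction via Lemma~\ref{lem:Vertical-gen} to the generic fiber $S$ (a del Pezzo surface over $K=\mathbb{C}(W)$ with $\rho_{K}(S)=1$) is exactly the paper's first step, but both implications of the resulting statement have genuine gaps. For sufficiency, ``$S$ is $K$-rational, hence contains a cylinder'' does not follow as you argue: a birational map $\varphi:\mathbb{P}^{2}_{K}\dashrightarrow S$ forces you to delete from $\mathbb{A}^{2}_{K}$ the entire locus where $\varphi$ fails to be an isomorphism, and this is a prescribed curve $B$ (a union of contracted curves), not something you are free to ``arrange'' to be finitely many vertical lines. If $B$ is horizontal for every $\mathbb{A}^{1}$-fibration of $\mathbb{A}^{2}$ (e.g.\ a component of $B$ is not isomorphic to $\mathbb{A}^{1}$), no open subset of the form $Z\times\mathbb{A}^{1}_{K}$ sits inside $\mathbb{A}^{2}\setminus B$ via a coordinate projection, and in general $\mathbb{P}^{2}\setminus B$ need not contain any cylinder at all (its log Kodaira dimension can be $2$). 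So rationality alone is not enough; one must choose the birational model carefully so that the boundary is small and Galois-stable. This is what the paper's Proposition~\ref{prop:CylGeq5} does, degree by degree ($d=9,8,6,5$): blow up the $K$-point, contract a Galois-stable set of disjoint $(-1)$-curves, and exhibit an explicit pencil whose complement is $\mathbb{A}^{1}_{*,K}\times\mathbb{A}^{1}_{K}$ or $\mathbb{A}^{2}_{K}$.

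For necessity, the two things you need --- a $K$-point and $d\geq 5$ --- are precisely the points your sketch leaves unproved. The paper gets the $K$-point not from a $\mathbb{G}_{a}$-fixed locus (the translation action does not extend regularly to $S$, and ``a fixed component yields, after blowing down, a $K$-point'' is unsubstantiated), but from Lemma~\ref{lem:Rational-Cylinder}: since the fibers of the cylinder are affine lines with one place at infinity, the induced map $\varphi:S\dashrightarrow\overline{Z}$ has a \emph{unique} proper base point, which is therefore Galois-invariant and $K$-rational (and $\rho_{K}(S)=1$ rules out $\varphi$ being regular). More seriously, your plan for $d\leq 4$ rests on a false premise: the pencil attached to a cylinder is \emph{not} one of the ``standard'' pencils of lines, conics or twisted cubics --- it lies in $|-\mu K_{S}|$ for an a priori arbitrarily large $\mu$, with general member highly singular at the unique base point, so no finite lattice-theoretic classification of low-degree Galois-stable pencils can capture it. The actual obstruction is numerical/local: either quote Segre--Manin--Iskovskikh non-$K$-rationality of degree $\leq 4$ del Pezzo surfaces with $\rho_{K}=1$ and combine with Proposition~\ref{cor:Rational-DP}, or argue as in Proposition~\ref{prop:noCylleq4} that $(S,\frac{1}{\mu}\mathcal{L})$ is not log-canonical at the base point $p$ (the coefficient $\alpha_{n}=-2$ computation), so Corti's $4n^{2}$-inequality gives $\mathcal{L}^{2}_{p}>4\mu^{2}$, contradicting $\mathcal{L}^{2}_{p}=d\mu^{2}\leq 4\mu^{2}$. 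As it stands, the ``technical heart'' you defer is exactly the theorem.
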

It follows in particular from this characterization that a del Pezzo
fibration fibration $\pi:V\rightarrow\mathbb{P}^{1}$ of degree $d\leq4$
does not admit any vertical $\mathbb{A}_{\mathbb{C}}^{1}$-cylinder.
But the reasons which prevent the existence of such vertical cylinders
does not give much insight concerning that of twisted cylinders in
total spaces of such fibrations. By general results due to Alekseev
\cite{Al87} and Pukhlikov \cite{Pu98}, ``most'' del Pezzo fibrations
$\pi:V\rightarrow\mathbb{P}^{1}$ of degree $d\leq4$ with smooth
total spaces are non-rational. On the other hand, since $\pi:V\rightarrow\mathbb{P}^{1}$
has a section by virtue of the Tsen-Lang Theorem, it follows from
\cite[Theorem 29.4]{Manin} that the total space of such a fibration
of degree $d\geq3$ is always unirational. As a consequence, ``most''
del Pezzo fibrations $\pi:V\rightarrow\mathbb{P}^{1}$ of degree $d=3,4$
with smooth total spaces cannot contain any $\mathbb{A}_{\mathbb{C}}^{1}$-cylinder
at all, vertical or twisted. Indeed if $V$ contains an open subset
$U\simeq Z\times\mathbb{A}_{\mathbb{C}}^{1}$ for some smooth quasi-projective
surface $Z$, then $Z$ is unirational hence rational, which would
imply in turn the rationality of $V$. 

Our second main result consists of a construction of families of del
Pezzo fibrations $\pi:V\rightarrow\mathbb{P}^{1}$ of any degree $d\leq4$
containing twisted cylinders of maximal possible dimension, which
arise as projective completions of $\mathbb{A}_{\mathbb{C}}^{3}$: 
\begin{thm}
\label{thm:MainThm2} For every $d\leq4$, there exist del Pezzo fibrations
$\pi:V\rightarrow\mathbb{P}^{1}$ of degree $d$ whose total spaces
contain $\mathbb{A}_{\mathbb{C}}^{3}$ as a twisted cylinder. 
\end{thm}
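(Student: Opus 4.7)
The plan is to construct, for each $d\in\{1,2,3,4\}$, an explicit del Pezzo fibration $\pi\colon V\to\mathbb{P}^{1}$ of degree $d$ whose total space contains $\mathbb{A}_{\mathbb{C}}^{3}$ as a Zariski open subset. Once such a $V$ is produced, the fact that $\mathbb{A}_{\mathbb{C}}^{3}\simeq\mathbb{A}_{\mathbb{C}}^{2}\times\mathbb{A}_{\mathbb{C}}^{1}$ is a \emph{twisted} $\mathbb{A}_{\mathbb{C}}^{1}$-cylinder in $V$ will be automatic: a vertical cylinder structure would yield in particular a vertical $\mathbb{A}_{\mathbb{C}}^{1}$-cylinder in $V$, contradicting Theorem \ref{thm:dP-Main} for $d\leq 4$.

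The general template is the following. Pick a smooth projective threefold $Y$ containing $\mathbb{A}_{\mathbb{C}}^{3}$ as an open subset, together with a pencil $\mathcal{P}$ of effective Cartier divisors on $Y$ whose scheme-theoretic base locus lies entirely in the boundary $B:=Y\setminus\mathbb{A}_{\mathbb{C}}^{3}$, and whose generic member is birational to a del Pezzo surface of degree $d$ over $\mathbb{C}(\mathbb{P}^{1})$. Successive blow-ups along strata of the base locus resolve the indeterminacy of the rational map $Y\dashrightarrow\mathbb{P}^{1}$ defined by $\mathcal{P}$, yielding a smooth projective threefold $\tilde{Y}\to\mathbb{P}^{1}$ still containing $\mathbb{A}_{\mathbb{C}}^{3}$ as an open subset. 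A relative MMP over $\mathbb{P}^{1}$ whose extremal contractions and flips are supported on the strict transform of $B$ then produces the required Mori del Pezzo fibration $\pi\colon V\to\mathbb{P}^{1}$ with $\mathbb{A}_{\mathbb{C}}^{3}\subset V$ still open.

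The case $d=3$ illustrates the approach in its cleanest form: take $Y=\mathbb{P}^{3}$ with $\mathbb{A}_{\mathbb{C}}^{3}=Y\setminus H_{\infty}$ and the pencil $\{\lambda x_{0}^{3}+\mu f(x_{1},x_{2},x_{3})=0\}_{[\lambda:\mu]}$ for a sufficiently general cubic polynomial $f$; the base locus reduces to the smooth plane cubic $C=\{f=0\}\cap H_{\infty}$, and blowing up $C$ yields a fibration whose generic fiber is a smooth cubic surface. The case $d=4$ can be treated analogously inside a suitable rational completion of $\mathbb{A}_{\mathbb{C}}^{3}$ by a Fano threefold of index $2$, using a pencil of hyperplane sections whose base curve lies in the boundary, for instance a smooth intersection of two quadrics $V_{4}\subset\mathbb{P}^{5}$ presented as a birational completion of $\mathbb{A}_{\mathbb{C}}^{3}$ via projection from a suitable line. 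For $d\in\{1,2\}$ one can work inside the weighted completions $\mathbb{P}(1,1,1,2)\supset\mathbb{A}_{\mathbb{C}}^{3}$ and $\mathbb{P}(1,1,2,3)\supset\mathbb{A}_{\mathbb{C}}^{3}$ (open affine charts centered at weight-$1$ coordinates), using pencils of anticanonical divisors whose general members are weighted del Pezzo surfaces of the required degree.

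The main technical obstacle is the verification, for each specific construction, that every step of the relative MMP is supported inside the boundary $B$. This requires an explicit identification of the curves generating the relevant extremal rays of the Mori cone of each intermediate threefold and confirmation that none of these curves meets $\mathbb{A}_{\mathbb{C}}^{3}$. A secondary difficulty is to ensure that the generic fiber of the resulting fibration is a smooth del Pezzo surface of the prescribed degree; in the low-degree cases this rests on a careful choice of the ambient completion $Y$ and of the defining pencil $\mathcal{P}$ so as to guarantee generic smoothness.
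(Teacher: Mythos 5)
Your reduction of the word ``twisted'' to Theorem \ref{thm:dP-Main} is exactly the paper's, and is correct: since a del Pezzo fibration of degree $d\leq4$ admits no vertical $\mathbb{A}_{\mathbb{C}}^{1}$-cylinder at all, any copy of $\mathbb{A}_{\mathbb{C}}^{3}$ in the total space is automatically a twisted cylinder. For $d\in\{1,2,3\}$ your template (anticanonical del Pezzo hypersurface $S$ of degree $e$ in $\mathbb{P}^{3}$, $\mathbb{P}(1,1,1,2)$, $\mathbb{P}(1,1,2,3)$, pencil generated by $S$ and $eH$, good resolution, relative MMP over $\mathbb{P}^{1}$) is also the paper's, which imports the two decisive facts from \cite{DK3}: that every relative MMP restricts to an isomorphism on the chart $\mathbb{A}_{\mathbb{C}}^{3}=\tilde{\mathbb{P}}\setminus\sigma^{-1}(H)$ (Proposition \ref{prop:MMP-preserving}), and that when $H\cap S$ is irreducible the output is a del Pezzo fibration of degree exactly $d$ rather than a conic bundle or a fibration of higher degree (Theorem \ref{thm:MMP-outputs}). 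You correctly flag these as the technical heart of the matter, but you give no argument for either; note in particular that the MMP may perform flips inside $\tilde{f}^{-1}(\infty)$ and contracts \emph{horizontal} divisors lying over $H\cap S$, so ``every step is supported in the boundary'' is precisely the statement to be proved, not an observation.

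The genuine gap is the case $d=4$, which is the actual new content of the theorem. Your proposed ambient variety, a smooth intersection of two quadrics $V_{4}\subset\mathbb{P}^{5}$, cannot serve as the starting point: by Furushima's classification of smooth projective compactifications of $\mathbb{C}^{3}$ with second Betti number one, the only Fano threefolds of Picard number one containing $\mathbb{A}_{\mathbb{C}}^{3}$ as an open subset are $\mathbb{P}^{3}$, the quadric $Q^{3}$, $V_{5}$ and certain $V_{22}$; the del Pezzo threefold of degree $4$ is not among them, so there is no boundary hyperplane section with complement $\mathbb{A}_{\mathbb{C}}^{3}$ and the pencil-of-hyperplane-sections construction cannot begin. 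The hedge ``birational completion via projection from a line'' is not a construction and changes the ambient threefold, after which nothing is verified. The paper's route is entirely different: it stays in $\mathbb{P}^{3}$ with a pencil of cubics, but chooses $H$ so that $H\cap S=L\cup C$ is a line plus a conic; then the generic fiber of a good resolution is a cubic surface containing a horizontal $(-1)$-curve $L_{\eta}$, and Proposition \ref{prop:dp4-cylinder} exhibits an explicit six-step resolution for which the exceptional divisor $E_{6}$ over $L$ is isomorphic to $\mathbb{P}^{1}\times\mathbb{P}^{1}$ with normal bundle $\mathcal{O}(-1,-1)$, so that its contraction is a legitimate first step of a relative MMP; this raises the degree of the generic fiber from $3$ to $4$. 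Without this (or some equally explicit substitute), the $d=4$ case of the theorem is not established.
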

The scheme of the article is as follows. The first section is devoted
to the proof of Theorem \ref{thm:dP-Main}: we first establish in
subsection \ref{sub:Non-existence} that a del Pezzo fibration of
degree $d\leq4$ does not admit vertical $\mathbb{A}_{\mathbb{C}}^{1}$-cylinder,
then in subsection \ref{sub:cylinders-exist} we give explicit constructions
of $\mathbb{A}^{1}$-cylinders inside generic fibers of del Pezzo
fibrations $\pi:V\rightarrow W$ of degree $d\geq5$ with a rational
section. We also discuss as a complement in subsection \ref{sub:A2-cyl}
the existence of vertical $\mathbb{A}_{\mathbb{C}}^{2}$-cylinders
in del Pezzo fibrations $\pi:V\rightarrow W$ of degree $d\geq5$.
Then in section two, we first review the general setup for the construction
of projective completions of $\mathbb{A}_{\mathbb{C}}^{3}$ into total
spaces of del Pezzo fibrations $\pi:V\rightarrow\mathbb{P}^{1}$ established
in \cite{DK3}. Then we proceed in detail to the construction of such
completions for the specific case $d=4$, which was announced without
proof in \cite{DK3}.

\section{Vertical cylinders in del Pezzo fibrations}

Letting $\pi:V\rightarrow W$ be a del Pezzo fibration over a normal
projective variety $W$ with function field $K$, the following Lemma
\ref{lem:Vertical-gen} implies that the existence of an $\mathbb{A}^{1}$-cylinder
$U\subset V$ vertical with respect to $\pi$ is equivalent to that
of an $\mathbb{A}_{K}^{1}$-cylinder in the fiber $S$ of $\pi$ over
the generic point of $W$. On the other hand, the existence of a rational
section of $\pi:V\rightarrow W$ is equivalent to the existence of
a $K$-rational point of $S$. Since $S$ is a smooth del Pezzo surface
defined over $K$, of degree $\deg(V/W)$ and with Picard number $\rho_{K}(S)$
equal to one, to establish Theorem \ref{thm:dP-Main}, it is thus
enough to show that such a smooth del Pezzo surface admits an $\mathbb{A}_{K}^{1}$-cylinder
if and only if it has degree $d\geq5$ and a $K$-rational point.
\begin{lem}
\label{lem:Vertical-gen} Let $f:X\to Y$ be a dominant morphism between
normal algebraic varieties over a field $k$. Then $X$ contains a
vertical $\mathbb{A}_{k}^{r}$-cylinder with respect to $f$ if and
only if the generic fiber of $f$ contains an open subset of the form
$T\times\mathbb{A}_{K}^{r}$ for some algebraic variety $T$ defined
over the function field $K$ of $Y$. \end{lem}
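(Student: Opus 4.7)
The plan is to argue both implications by passing between the generic fiber $X_{\eta} := X \times_Y \mathrm{Spec}(K)$ and a spread-out model over a dense open subset of $Y$.

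For the forward direction, suppose $U \simeq Z \times \mathbb{A}_k^r \subset X$ is a vertical cylinder with $f|_U = h \circ \mathrm{pr}_Z$ for some morphism $h : Z \to Y$. Since $U$ is open in the irreducible scheme $X$, it contains the generic point of $X$, which maps to $\eta$ because $f$ is dominant; hence $U_{\eta} := U \times_Y \mathrm{Spec}(K)$ is a nonempty open subset of $X_{\eta}$, and $h$ is itself dominant. Using the factorization of $f|_U$ through $h$ together with the identity $\mathbb{A}_k^r \times_k \mathrm{Spec}(K) = \mathbb{A}_K^r$, I compute
\[
U_{\eta} \simeq (Z \times_k \mathbb{A}_k^r) \times_Y \mathrm{Spec}(K) \simeq Z_{\eta} \times_K \mathbb{A}_K^r,
\]
where $Z_{\eta} := Z \times_Y \mathrm{Spec}(K)$ is an integral $K$-scheme of finite type, i.e., an algebraic variety $T$ defined over $K$, which gives the required open subset of $X_{\eta}$.

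For the converse, assume $X_{\eta}$ contains an open subset $V \simeq T \times_K \mathbb{A}_K^r$ for a $K$-variety $T$, and observe that $T \times_K \mathbb{A}_K^r \simeq T \times_k \mathbb{A}_k^r$ as $K$-schemes. The idea is to spread this datum out to a dense open of $Y$ via the standard finite-presentation limit arguments of \cite{EGAIV}. Writing $\mathrm{Spec}(K) = \varprojlim U$ over the dense affine open subsets $U \subset Y$, one finds such a $U$, a $U$-scheme $\mathcal{T} \to U$ of finite type with generic fiber $T$, and a $U$-morphism $j : \mathcal{T} \times_k \mathbb{A}_k^r \to f^{-1}(U)$ (where $\mathcal{T} \times_k \mathbb{A}_k^r$ is viewed as a $U$-scheme via the projection to $\mathcal{T}$) whose base change to $\eta$ recovers the given open immersion $V \hookrightarrow X_{\eta}$. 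Because the locus on $Y$ over which a finitely presented morphism is an open immersion is constructible and contains $\eta$, after possibly shrinking $U$ the morphism $j$ is itself an open immersion. Its image $W$ is then open in $f^{-1}(U)$, hence open in $X$, and by construction $f|_W = h \circ \mathrm{pr}_{\mathcal{T}}$ where $h : \mathcal{T} \to U \hookrightarrow Y$ is the structure morphism, so $W$ is the desired vertical $\mathbb{A}_k^r$-cylinder. The main technical step, and essentially the only non-formal one, is the invocation of the spread-out and openness results of EGA IV in the converse direction; the forward implication is a transparent base-change computation.
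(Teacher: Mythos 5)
Your argument is correct, and your forward direction is essentially the paper's verbatim (yours is in fact slightly more careful: the base of the induced cylinder in $X_\eta$ is the generic fiber $Z\times_Y\mathrm{Spec}(K)$ of $h$, which is what you write). For the converse, both proofs rest on the same principle---the cylinder structure on the generic fiber must spread out to an open subset---but the packaging differs. The paper removes the closure $\Delta$ of $X_\eta\setminus V$ and uses the rational map $\rho:X\setminus\Delta\dashrightarrow\overline{T}$ to a projective $k$-model $\overline{T}$ of $T$ induced by $\mathrm{pr}_T$, whose generic fiber is the affine $r$-space over the function field $K'$ of $T$, asserts the existence of an open subset $\overline{T}_0\subset\overline{T}$ over which $\rho$ is regular with $\rho^{-1}(\overline{T}_0)\simeq\overline{T}_0\times\mathbb{A}_k^r$, and then shrinks $\overline{T}_0$ so that $\overline{T}\dashrightarrow Y$ becomes regular in order to get verticality. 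You instead spread out directly over a dense open $U\subset Y$: a model $\mathcal{T}\to U$ of $T$ and a $U$-morphism $j:\mathcal{T}\times_k\mathbb{A}_k^r\to f^{-1}(U)$ recovering $V\hookrightarrow X_\eta$, made into an open immersion after shrinking $U$; verticality is then automatic because $j$ is a $U$-morphism, a point the paper has to arrange by a separate shrinking. The two constructions produce essentially the same cylinder (your $\mathcal{T}$ and the paper's $Z\subset\overline{T}_0$ are both $k$-models of $K'$), and your route has the merit of making explicit, via the limit formalism, the spreading-out step that the paper leaves implicit in ``it follows that there exists an open subset $\overline{T}_0$\dots''. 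Two minor points: the clean justification for promoting $j$ to an open immersion over a smaller $U$ is the limit statement (EGA~IV, 8.10.5) that a morphism which becomes an open immersion after base change to $\mathrm{Spec}(K)=\varprojlim U$ is already one over some $U$---constructibility of the locus of fibers over which $j$ restricts to an open immersion is not by itself sufficient, since fiberwise open immersions only yield an open immersion over that locus after an additional flatness/fibral-criterion argument; and one should note (or arrange by further shrinking) that $\mathcal{T}$ is an algebraic variety over $k$ as the definition of a cylinder requires, which is automatic once $\mathcal{T}\times_k\mathbb{A}_k^r$ is identified with an open subset of the variety $X$.
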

\begin{proof}
Indeed, if $Z\times\mathbb{A}_{k}^{r}\simeq U\subset X$ is a vertical
$\mathbb{A}_{k}^{r}$-cylinder with respect to $f$ then 
\[
U\times_{Y}\mathrm{Spec}(K)\simeq(Z\times\mathbb{A}_{k}^{r})\times_{Y}\mathrm{Spec}(K)\simeq(Z\times_{\mathrm{Spec}(k)}\mathrm{Spec}(K))\times\mathbb{A}_{K}^{r}
\]
is an $\mathbb{A}_{K}^{r}$-cylinder contained in the fiber $X_{\eta}$
of $f$ over the generic point $\eta$ of $Y$. Conversely, if $V\simeq T\times\mathbb{A}_{K}^{r}$
is an $\mathbb{A}_{K}^{r}$-cylinder inside $X_{\eta}$, then let
$\Delta$ be the closure of $X_{\eta}\setminus V$ in $X$ and let
$X_{0}=X\setminus\Delta$. The projection $\mathrm{pr}_{T}:V\rightarrow T$
induces a rational map $\rho:X_{0}\dashrightarrow\overline{T}$ to
a projective model $\overline{T}$ of the closure of $T$ in $X$,
whose generic fiber is isomorphic to the affine space $\mathbb{A}_{K'}^{r}$
over the function field $K'$ of $T$. It follows that there exists
an open subset $\overline{T}_{0}$ of $\overline{T}$ over which $\rho$
is regular such that $\rho^{-1}(\overline{T}_{0})\simeq\overline{T}_{0}\times\mathbb{A}_{k}^{r}$.
Replacing $\overline{T}_{0}$ if necessary by a smaller open subset
$Z$ on which the rational map $\overline{T}\dashrightarrow Y$ induced
by $f$ is regular, we obtain an $\mathbb{A}_{k}^{r}$-cylinder $U=\rho^{-1}(Z)$
in $X$ which is vertical with respect to $f$. 
\end{proof}
Before we proceed to the proof of Theorem \ref{thm:dP-Main} in the
next three subsections, we list some corollaries of it.
\begin{cor}
Let $\pi:V\rightarrow C$ be a complex del Pezzo fibration over a
curve $C$. If $\deg(V/C)\geq5$ then $V$ contains a vertical $\mathbb{A}^{1}$-cylinder. \end{cor}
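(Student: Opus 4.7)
The plan is to reduce the corollary immediately to Theorem~\ref{thm:dP-Main}, which characterizes del Pezzo fibrations admitting vertical $\mathbb{A}^1_{\mathbb{C}}$-cylinders as those of degree at least $5$ that possess a rational section. The degree hypothesis is assumed, so the entire task collapses to producing a rational section of $\pi:V\to C$. Equivalently, by passing to the generic fiber as explained in the discussion preceding Lemma~\ref{lem:Vertical-gen}, we must exhibit a $K$-rational point on the smooth del Pezzo surface $S=V_{\eta}$ of degree $d\geq 5$ defined over the function field $K=\mathbb{C}(C)$.

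My preferred route would be to invoke the Graber--Harris--Starr theorem: the general closed fiber of $\pi$ is a smooth complex del Pezzo surface, hence rational and a fortiori rationally connected, so any proper dominant morphism from a complex variety to the smooth projective curve $C$ with rationally connected general fiber admits a regular section. This section is in particular a rational section, so Theorem~\ref{thm:dP-Main} applies and yields the vertical $\mathbb{A}^1_{\mathbb{C}}$-cylinder claimed in the statement.

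An equally acceptable alternative, more in line with the Tsen--Lang citation already used in the paper for $C=\mathbb{P}^1$, would be to observe that $K=\mathbb{C}(C)$ is a $C_1$ field by Tsen's theorem and that every smooth del Pezzo surface of degree $\geq 5$ over a $C_1$ field admits a $K$-rational point (via triviality of the Brauer group, which handles the Severi--Brauer cases, combined with a short case-by-case analysis for the remaining types). Either way, the only real obstacle is selecting the correct black box for the existence of the section; once Theorem~\ref{thm:dP-Main} is granted, no additional geometric input about cylinders is required, and the corollary follows in one step.
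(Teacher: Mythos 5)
Your proposal is correct and essentially coincides with the paper's argument: the paper's one-line proof is exactly your ``alternative'' route, namely that the generic fiber is a smooth del Pezzo surface over $K=\mathbb{C}(C)$ and hence has a $K$-rational point by the Tsen--Lang theorem, after which Theorem~\ref{thm:dP-Main} applies. Your preferred Graber--Harris--Starr route is also valid (it even produces a regular section), but it is a heavier black box than needed for what is the same reduction.
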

\begin{proof}
Indeed, the generic fiber of $\pi$ is then a smooth del Pezzo surface
over the function field $K$ of $C$, and hence it has a $K$-rational
point by virtue of the Tsen-Lang Theorem (see e.g. \cite[Theorem 3.12]{Has06}). \end{proof}
\begin{cor}
Every complex del Pezzo fibration $\pi:V\rightarrow W$ of degree
$\deg(V/W)=5$ contains a vertical $\mathbb{A}^{1}$-cylinder.\end{cor}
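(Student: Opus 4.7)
The plan is to reduce the corollary directly to Theorem \ref{thm:dP-Main} and then to a classical fact about rational points on del Pezzo surfaces of degree $5$. By Theorem \ref{thm:dP-Main}, since $\deg(V/W)=5\geq 5$, the existence of a vertical $\mathbb{A}_{\mathbb{C}}^{1}$-cylinder is equivalent to the existence of a rational section of $\pi:V\to W$. By Lemma \ref{lem:Vertical-gen} and the discussion immediately following it, such a rational section is in turn equivalent to the existence of a $K$-rational point on the generic fiber $S$ of $\pi$, where $K$ denotes the function field of $W$. Here $S$ is a smooth del Pezzo surface of degree $5$ defined over $K$.

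The corollary therefore reduces entirely to the following classical statement, first observed by Enriques and later made rigorous by Manin, Swinnerton-Dyer and Skorobogatov: every smooth del Pezzo surface of degree $5$ over an arbitrary field $k$ possesses a $k$-rational point (and is in fact $k$-rational). Concretely, over $\bar{K}$ the surface $S$ is the blow-up of $\mathbb{P}^{2}$ at four points in general position, and it carries exactly ten $(-1)$-curves whose mutual incidence graph is the Petersen graph. The absolute Galois group of $K$ acts on this configuration through a subgroup of the symmetric group $S_{5}$, the Weyl group of the root system of type $A_{4}$ associated with the primitive part of $\mathrm{Pic}(\bar{S})$. A case-by-case inspection of the orbits of this action, or equivalently of the Galois orbits on the five distinct contractions $S_{\bar{K}}\to\mathbb{P}_{\bar{K}}^{2}$ and on the Galois-invariant conic bundle structures they induce, produces a zero-cycle of degree one on $S$; one then invokes the Lang--Nishimura theorem (alternatively a direct parametric argument on the corresponding Galois-invariant conic bundle) to upgrade this zero-cycle to an honest $K$-rational point.

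Once this classical input is granted, the corollary is immediate. The main (and only) obstacle is thus this existence of rational points on degree-$5$ del Pezzo surfaces over arbitrary fields; this is standard and can be cited directly, for instance from Skorobogatov's treatment of torsors and rational points on del Pezzo surfaces. With that citation in place, combining it with Theorem \ref{thm:dP-Main} concludes the proof.
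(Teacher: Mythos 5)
Your argument is correct and follows the same route as the paper: reduce via Theorem \ref{thm:dP-Main} (together with the identification of rational sections with $K$-rational points of the generic fiber) to the classical fact that every smooth del Pezzo surface of degree $5$ over any field has a rational point, which the paper simply cites from Swinnerton-Dyer \cite{SD72}. Your additional sketch of why that classical fact holds is optional background; citing it, as you also propose, suffices.
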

\begin{proof}
Indeed, by virtue of \cite{SD72}, every del Pezzo surface of degree
$5$ over a field $K$ has a $K$-rational point. 
\end{proof}

\subsection{Rationality of del Pezzo surfaces with Picard number one containing
a cylinder}

Our first observation is that the rationality of a del Pezzo surface
$S$ of Picard number one is a necessary condition for the existence
of an $\mathbb{A}^{1}$-cylinder in it. 
\begin{prop}
\label{cor:Rational-DP} Let $S$ be a smooth del Pezzo surface defined
over a field $K$ of characteristic zero $0$ with Picard number one.
If $S$ contains an $\mathbb{A}_{K}^{1}$ cylinder $U\simeq Z\times\mathbb{A}_{K}^{1}$
over a smooth curve $Z$ defined over $K$, then $S$ is rational.
\end{prop}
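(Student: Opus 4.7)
\emph{Plan of proof.} The plan is to use the cylinder structure to produce a $\mathbb{P}^{1}$-fibration on a suitable blow-up of $S$, show that its base is a smooth curve of genus zero over $K$, and finally exploit the assumption $\rho_{K}(S)=1$ to identify this base with $\mathbb{P}_{K}^{1}$, from which $K$-rationality of $S$ will follow. Concretely, I would complete $Z$ to a smooth projective curve $\bar{Z}$ over $K$ and extend the first projection $\mathrm{pr}_{Z}\colon U\simeq Z\times\mathbb{A}_{K}^{1}\to Z\hookrightarrow\bar{Z}$ to a rational map $\psi\colon S\dashrightarrow\bar{Z}$. Resolving the indeterminacy of $\psi$ through a sequence of blow-ups $\sigma\colon\tilde{S}\to S$ produces a proper surjective morphism $\phi\colon\tilde{S}\to\bar{Z}$ whose generic fibre is a smooth projective curve over $K(\bar{Z})$ containing $\mathbb{A}_{K(\bar{Z})}^{1}$ as an open subvariety, hence isomorphic to $\mathbb{P}_{K(\bar{Z})}^{1}$. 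After further blow-ups if necessary, $\phi$ may be assumed to carry a genuine section $\Sigma\subset\tilde{S}$, $K$-isomorphic to $\bar{Z}$, obtained for instance as the strict transform of the closure in $S$ of the divisor $Z\times\{0\}\subset U$.

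Next, since $S$ is a smooth del Pezzo surface, it is geometrically rational and in particular satisfies $h^{1}(S,\mathcal{O}_{S})=0$; this vanishing is preserved under blow-ups, hence $h^{1}(\tilde{S},\mathcal{O}_{\tilde{S}})=0$. As the fibres of $\phi$ are trees of rational curves, cohomology and base change give $R^{1}\phi_{*}\mathcal{O}_{\tilde{S}}=0$, and the Leray spectral sequence for $\phi$ then yields an injection $H^{1}(\bar{Z},\mathcal{O}_{\bar{Z}})\hookrightarrow H^{1}(\tilde{S},\mathcal{O}_{\tilde{S}})=0$, so that $\bar{Z}$ is a smooth conic over $K$.

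The main obstacle is the final step, where the hypothesis $\rho_{K}(S)=1$ must be genuinely used to conclude that $\bar{Z}$ admits a $K$-rational point, equivalently that $\bar{Z}\simeq\mathbb{P}_{K}^{1}$: once this is settled, the $\mathbb{P}^{1}$-fibration $\phi\colon\tilde{S}\to\mathbb{P}_{K}^{1}$ equipped with a section makes $\tilde{S}$, and hence $S$, $K$-birational to $\mathbb{P}_{K}^{1}\times\mathbb{P}_{K}^{1}$. I would approach this last step by analysing the push-forward $C=\sigma_{*}\Sigma\subset S$: since $\rho_{K}(S)=1$, its class in $\mathrm{Pic}(S)_{\mathbb{Q}}$ is a positive rational multiple $m(-K_{S})$ of the ample anticanonical generator, and the adjunction formula gives $2p_{a}(C)-2=m(m-1)d$ with $d=(-K_{S})^{2}$. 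Combining this with the fact that the normalisation of $C$ is $\bar{Z}$, which is of geometric genus zero, should constrain the numerics sharply; more globally, the family of curves $\{\sigma_{*}\phi^{-1}(t)\}_{t\in\bar{Z}}$ on $S$ lies in a single numerical class, and one expects to show that this family is actually a linear pencil, so that $\bar{Z}$ embeds as the parameter space of the pencil into $\mathbb{P}_{K}^{1}$ and is therefore itself isomorphic to $\mathbb{P}_{K}^{1}$. The difficulty lies precisely in this transfer from a Picard-number constraint on $S$ to the arithmetic of the base curve $\bar{Z}$, and I expect the argument to proceed either by a careful Galois-descent analysis of $\mathrm{Pic}(\tilde{S}_{\bar{K}})$ together with the exceptional divisors of $\sigma$ and the components of the fibres of $\phi$, or by invoking a classical rationality criterion for minimal geometrically rational surfaces of Picard number one (of Iskovskikh--Manin type) ruling out a $K$-birational equivalence between such a surface and a product $\mathbb{P}_{K}^{1}\times\bar{Z}$ with $\bar{Z}$ a non-trivial conic.
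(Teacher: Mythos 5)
Your overall strategy (extend the projection to a rational map $\psi\colon S\dashrightarrow\overline{Z}$, resolve it to a $\mathbb{P}^{1}$-fibration, show $\overline{Z}$ has genus zero, then use $\rho_{K}(S)=1$ to identify $\overline{Z}$ with $\mathbb{P}_{K}^{1}$) matches the paper's, and your Leray-spectral-sequence argument that $\overline{Z}$ is a smooth conic is correct. But the proof has a genuine gap exactly where you flag ``the main obstacle'': you never actually produce a $K$-rational point on $\overline{Z}$, and none of the routes you sketch closes this. The adjunction computation $2p_{a}(C)-2=m(m-1)d$ only constrains numerical invariants of the section's image and cannot detect the existence of a $K$-point on $\overline{Z}$ (the section $\Sigma$ is $K$-isomorphic to $\overline{Z}$ itself, so no information is gained). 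The ``linear pencil'' idea is closer to the truth, but as stated it is circular: a pointless conic embeds perfectly well into a higher-dimensional linear system $|C|$, so you must explain why the parameter curve is a \emph{line} in $|C|$, which again amounts to finding a $K$-point. Note also that by Lang--Nishimura, $S$ is $K$-birational to $\overline{Z}\times\mathbb{P}_{K}^{1}$, so $S(K)\neq\emptyset$ if and only if $\overline{Z}(K)\neq\emptyset$; the whole content of the proposition is therefore concentrated in producing one $K$-point, and no purely numerical or cohomological argument will do it.

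The paper's mechanism, which your plan is missing, is the following. Since $\rho_{K}(S)=1$, every curve on $S$ is ample, so any two members of the pencil $\{\overline{\{z\}\times\mathbb{A}^{1}}\}$ must meet; on the other hand each such member meets the boundary $S\setminus U$ in a single point because $\mathbb{A}^{1}$ has a unique place at infinity. Hence $\psi$ is strictly rational with a \emph{unique} proper base point $p$, which is Galois-invariant and therefore $K$-rational (characteristic zero, $S$ smooth). The minimal resolution of $\psi$ then consists of blow-ups of $K$-rational points only, and its last exceptional divisor is a curve isomorphic to $\mathbb{P}_{K}^{1}$ that is a section of $\tilde{\varphi}\colon\tilde{S}\rightarrow\overline{Z}$; this forces $\overline{Z}\simeq\mathbb{P}_{K}^{1}$, and rationality of $S$ follows since $U$ is birational to $\overline{Z}\times\mathbb{P}_{K}^{1}$. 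You use $\rho_{K}(S)=1$ only implicitly; the paper uses it twice, first to rule out that $\psi$ is everywhere regular (otherwise $\mathrm{Pic}(\overline{Z})\oplus\mathbb{Z}\langle H\rangle$ would inject into $\mathrm{Pic}(S)$) and then, via ampleness, to force the uniqueness of the proper base point. Supplying this base-point argument is what your proof needs to be complete.
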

Given an $\mathbb{A}_{K}^{1}$-cylinder $U\simeq Z\times\mathbb{A}_{K}^{1}$
inside $S$, the projection $\mathrm{pr}_{Z}:U\rightarrow Z$ induces
a rational map $\varphi:S\dashrightarrow\overline{Z}$ over the smooth
projective model $\overline{Z}$ of $Z$. Note that $S$ is rational
provided that $\overline{Z}$ is rational, hence isomorphic to $\mathbb{P}_{K}^{1}$.
The assertion of the proposition is thus a direct consequence of the
following lemma:
\begin{lem}
\label{lem:Rational-Cylinder} Under the assumption of Proposition
\ref{cor:Rational-DP}, the following hold: 

a) The rational map $\varphi:S\dashrightarrow\overline{Z}$ is not
regular and it as a unique proper base point, which is $K$-rational. 

b) The curve $\overline{Z}$ is rational. \end{lem}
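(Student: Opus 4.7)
The plan is to first dispose of the regularity of $\varphi$, then resolve its indeterminacies in order to locate its base locus, and finally exploit the resulting unique $K$-rational base point to force $\overline{Z}\simeq\mathbb{P}^{1}_{K}$.

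For the first assertion of (a) I argue by contradiction: if $\varphi:S\to\overline{Z}$ were a morphism, a general fiber $F$ would be an effective divisor on $S$ with $F^{2}=0$; but $\rho_{K}(S)=1$ together with $-K_{S}$ ample forces the generator of $\mathrm{Pic}(S)\otimes\mathbb{Q}$ to be ample, so every nonzero effective divisor on $S$ has strictly positive self-intersection, giving a contradiction. Hence $\varphi$ has base points and I may take a minimal resolution of indeterminacies $\sigma:\tilde{S}\to S$, producing a morphism $\tilde{\varphi}:\tilde{S}\to\overline{Z}$ whose generic fiber is a smooth proper $K(Z)$-curve containing $\mathbb{A}^{1}_{K(Z)}$ densely, hence is $\mathbb{P}^{1}_{K(Z)}$ with the complement of $\sigma^{-1}(U)$ in it reducing to a single $K(Z)$-point $q_{\eta}$. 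Consequently any horizontal (i.e., dominating $\overline{Z}$) $K$-irreducible component of $\sigma^{-1}(S\setminus U)$ meets the generic fiber inside $\{q_{\eta}\}$, so contains $q_{\eta}$ and thus its closure $Q\subset\tilde{S}$; two distinct horizontal components both containing the curve $Q$ would coincide, so $\sigma^{-1}(S\setminus U)$ has exactly one horizontal component, namely $Q$ itself.

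The most delicate step will be converting this into uniqueness of the proper base point of $\varphi$. Here I would invoke the Rigidity Lemma applied to $\sigma$ (which has connected fibers and satisfies $\sigma_{*}\mathcal{O}_{\tilde{S}}=\mathcal{O}_{S}$): if the connected exceptional fiber $\sigma^{-1}(p)$ over some proper base point $p$ of $\varphi$ were entirely contracted by $\tilde{\varphi}$, then $\tilde{\varphi}$ would factor through $\sigma$ in a neighborhood of $p$, making $\varphi$ regular at $p$, which is absurd. So each proper base point contributes at least one horizontal exceptional component of $\sigma^{-1}(S\setminus U)$, and by the uniqueness of $Q$ there is exactly one proper base point $p$. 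Repeating the same argument after base change to $\overline{K}$ shows that $Q$ is in fact geometrically irreducible; since by construction $Q\simeq\mathbb{P}^{1}_{K(q)}$ for the closed point $q$ blown up at the corresponding stage of $\sigma$, geometric irreducibility forces $K(q)=K$, and the residue-field inclusion $K(p)\hookrightarrow K(q)$ coming from the fact that $q$ lies over $p$ in the sequence of blowups then gives $K(p)=K$ as well.

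For (b), let $E\simeq\mathbb{P}^{1}_{K}$ be the exceptional divisor of the blowup of $p$ in $S$ and $\tilde{E}\simeq\mathbb{P}^{1}_{K}$ its strict transform in $\tilde{S}$. The restriction $\tilde{\varphi}|_{\tilde{E}}:\mathbb{P}^{1}_{K}\to\overline{Z}$ is either dominant, in which case the inclusion $K(\overline{Z})\hookrightarrow K(\tilde{E})=K(t)$ combined with L\"uroth's theorem forces $\overline{Z}\simeq\mathbb{P}^{1}_{K}$, or constant, in which case its image is a $K$-rational point of $\overline{Z}$; combined with the geometric rationality of $\overline{Z}$ (which follows from $S$ being geometrically rational and birational to $Z\times\mathbb{A}^{1}$), this likewise yields $\overline{Z}\simeq\mathbb{P}^{1}_{K}$.
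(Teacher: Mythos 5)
Your proof is correct, but it takes a genuinely different route from the paper's at almost every step, so it is worth comparing the two. For non-regularity the paper observes that a regular $\mathbb{P}^{1}$-fibration extending $\mathrm{pr}_{Z}$ would admit the section coming from $Z\times\{0\}$, forcing $\mathrm{Pic}(\overline{Z})\oplus\mathbb{Z}\langle H\rangle$ to inject into $\mathrm{Pic}(S)$ and hence $\rho_{K}(S)\geq2$; your argument via $F^{2}=0$ against the positivity of self-intersection of effective divisors when $\rho_{K}(S)=1$ is an equivalent and equally valid variant. For the uniqueness of the proper base point the paper works directly on $S_{\overline{K}}$: a general member of the mobile linear system is the closure of a fibre $\{z\}\times\mathbb{A}^{1}$ and has a unique place at infinity, so two general members meet in a single point, which is then the unique proper base point and is $K$-rational by Galois invariance. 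You instead pass to the resolution, identify the generic fibre of $\tilde{\varphi}$ as $\mathbb{P}^{1}_{K(Z)}$ with a one-point complement of $\sigma^{-1}(U)$, and use the rigidity lemma to convert each indeterminacy point into a horizontal exceptional divisor; this is heavier machinery but it hands you geometric irreducibility of the unique horizontal exceptional curve, and hence $K$-rationality of the blown-up centre and of $p$, essentially for free. For (b) the paper shows that the minimal resolution is a tower of blow-ups of $K$-rational points whose last exceptional divisor is a section $\simeq\mathbb{P}^{1}_{K}$ of $\tilde{\varphi}$, which gives $\overline{Z}\simeq\mathbb{P}^{1}_{K}$ at once (and this structural description of the resolution is reused in the proof of Proposition \ref{prop:noCylleq4}); you avoid having to produce a section and instead apply L\"uroth when the first exceptional curve dominates $\overline{Z}$, and the genus-zero-plus-rational-point criterion (using geometric rationality of $\overline{Z}$, inherited from $S$) when it does not. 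In short, your route is somewhat longer and relies on less fine information about the resolution, while the paper's is shorter and sets up data that the subsequent arguments exploit; both are complete.
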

\begin{proof}
Since $\mathrm{Pic}(S)\simeq\mathbb{Z}$, $\varphi$ is strictly rational.
Indeed, otherwise it would be a $\mathbb{P}^{1}$-fibration admitting
a section $H$ defined over $K$ and the natural homomorphism $\mathrm{Pic}(\overline{Z})\oplus\mathbb{Z}\langle H\rangle\rightarrow\mathrm{Pic}(S)$
would be injective. The rational map $\varphi_{\overline{K}}:S_{\overline{K}}\dashrightarrow\overline{Z}_{\overline{K}}$
obtained by base change to the algebraic closure $\overline{K}$ of
$K$ is again strictly rational, extending the $\mathbb{A}^{1}$-fibration
$\mathrm{pr}_{Z_{\overline{K}}}:U_{\overline{K}}\simeq Z_{\overline{K}}\times\mathbb{A}_{\overline{K}}^{1}\rightarrow Z_{\overline{K}}$.
Since the closed fibers of $\mathrm{pr}_{Z_{\overline{K}}}$ are isomorphic
to $\mathbb{A}_{\overline{K}}^{1}$, a general member of $\varphi_{\overline{K}}$
is a projective curve with a unique place at infinity. It follows
that $\varphi_{\overline{K}}$ has a unique proper base point. This
implies in turn that $\varphi:S\dashrightarrow\overline{Z}$ has a
unique proper base point $p$, which is necessarily $K$-rational.
The same argument implies that a minimal resolution $\sigma:\tilde{S}\rightarrow S$
of the indeterminacies of $\varphi$ is obtained by blowing-up a finite
sequence of $K$-rational points, the last exceptional divisor produced
being a section $H\simeq\mathbb{P}_{K}^{1}$ of the resulting $\mathbb{P}^{1}$-fibration
$\tilde{\varphi}=\varphi\circ\sigma:\tilde{S}\rightarrow\overline{Z}$.
This implies that $\overline{Z}\simeq\mathbb{P}_{K}^{1}$. \end{proof}
\begin{rem}
In the previous lemma, the hypothesis that the Picard number of $S$
is equal to one is crucial to infer that the projection morphism $\mathrm{pr}_{Z}:U\simeq Z\times\mathbb{A}_{K}^{1}\rightarrow Z$
cannot extend to an everywhere defined $\mathbb{P}^{1}$-fibration
$\varphi:S\rightarrow\overline{Z}$. Indeed, for instance, letting
$\overline{Z}$ be a smooth geometrically rational curve without $K$-rational
point, $S=\overline{Z}\times\mathrm{Proj}_{K}(K[x,y])$ is a smooth
del Pezzo surface of Picard number $\rho_{K}(S)=2$ and degree $8$,
without $K$-rational point, hence in particular non $K$-rational,
containing a cylinder $U=S\setminus(\overline{Z}\times[0:1])\simeq\overline{Z}\times\mathbb{A}_{K}^{1}$.
\end{rem}

\subsection{\label{sub:Non-existence}Non-existence of $\mathbb{A}^{1}$-cylinders
in del Pezzo surfaces of degree $d\leq4$ }

Recall \cite[29.4.4., (iii), p.159]{Manin}  that the degree $d$
of a smooth del Pezzo surface $S$ of Picard number one ranges over
the set $\{1,2,3,4,5,6,8,9\}$. By classical results of Segre-Manin
\cite{Se43,Man66} and Iskovskikh \cite{Isk96}, such a surface of
degree $d\leq4$ is not rational over $K$. Combined with Proposition
\ref{cor:Rational-DP}, this implies the following:
\begin{prop}
\label{prop:noCylleq4} A smooth del Pezzo surface defined over a
field $K$ of characteristic zero $0$ of degree $d\leq4$ and with
Picard number one does not contain any $\mathbb{A}_{K}^{1}$-cylinder.\end{prop}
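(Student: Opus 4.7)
The plan is to argue by contradiction, synthesizing Proposition~\ref{cor:Rational-DP} with the classical nonrationality results for Picard number one del Pezzo surfaces of low degree recalled just before the statement. Suppose that a smooth del Pezzo surface $S$ of degree $d\leq 4$ and Picard number one over $K$ admits an $\mathbb{A}_K^1$-cylinder $U\simeq Z\times\mathbb{A}_K^1$. Since $S$ is a smooth surface and $U\subset S$ is open, the product $Z\times\mathbb{A}_K^1$ is smooth, which forces $Z$ to be a smooth curve over $K$. (Equivalently, one first passes to the smooth locus of $Z$ and correspondingly shrinks $U$, which only removes finitely many fibers of the projection $\mathrm{pr}_Z$ and so preserves the cylinder structure.)

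With $Z$ now a smooth curve, Proposition~\ref{cor:Rational-DP} applies directly and implies that $S$ is $K$-rational. On the other hand, the theorems of Segre--Manin for $d\in\{1,2,3\}$ and of Iskovskikh for $d=4$ quoted above assert that a smooth del Pezzo surface of degree $d\leq 4$ with Picard number one defined over a field of characteristic zero is never $K$-rational. This contradiction establishes the proposition.

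There is essentially no serious obstacle at this stage: both the rationality criterion for cylinder-containing surfaces of Picard number one and the nonrationality of low-degree del Pezzo surfaces of Picard number one have already been established or imported, so the proof reduces to a one-line synthesis of these two inputs. The only minor verification needed is the reduction to a smooth base curve $Z$, which is automatic from the smoothness of $S$ and the openness of $U$ in $S$.
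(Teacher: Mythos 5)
Your argument is correct, and it is in fact the deduction that the paper itself sketches in the sentence preceding the proposition (``Combined with Proposition \ref{cor:Rational-DP}, this implies the following''); your reduction to a smooth curve $Z$ is fine, since $U$ is open in the smooth surface $S$, so $Z\times\mathbb{A}_K^1$ is smooth of dimension two and smoothness descends to $Z$ along the projection (or one simply shrinks to the smooth locus of $Z$). However, the proof actually written in the paper takes a deliberately different, self-contained route: it does not invoke the nonrationality theorems of Segre--Manin and Iskovskikh at all. Instead, it uses Lemma \ref{lem:Rational-Cylinder} to produce the rational map $\varphi:S\dashrightarrow\overline{Z}\simeq\mathbb{P}^1_K$ with a unique, $K$-rational proper base point $p$, writes the associated mobile linear system as $\mathcal{L}\subset|-\mu K_S|$ (possible because $\mathrm{Pic}(S)=\mathbb{Z}\langle -K_S\rangle$), shows by intersecting with a general fiber of the resolved $\mathbb{P}^1$-fibration that $(S,\frac{1}{\mu}\mathcal{L})$ is not log-canonical at $p$, and then applies Corti's version of the $4n^2$ inequality \cite{CPR} to get $\mathcal{L}_p^2>4\mu^2$, contradicting $\mathcal{L}_p^2=\mathcal{L}^2=d\mu^2\le 4\mu^2$. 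The trade-off is clear: your synthesis is shorter but rests on the imported rationality classification \cite{Se43,Man66,Isk96} (whose exact attribution by degree is a minor gloss; the paper cites them jointly for $d\le 4$), whereas the paper's argument makes the role of the bound $d\le 4$ completely transparent and keeps the proof independent of those classical results, at the cost of invoking the $4n^2$ multiplicity bound. Both routes ultimately pass through Lemma \ref{lem:Rational-Cylinder}, yours implicitly via Proposition \ref{cor:Rational-DP}.
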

\begin{proof}
We find enlightening to give an alternative argument which does not
explicitly rely on the aforementioned results of Segre-Manin-Iskovskikh.
So suppose for contradiction that $S$ contains an open subset $U\simeq Z\times\mathbb{A}_{K}^{1}$
for a certain smooth curve $Z$ defined over $K$. By Lemma \ref{lem:Rational-Cylinder},
the rational map $\varphi:S\dashrightarrow\overline{Z}$ to the smooth
projective model $\overline{Z}\simeq\mathbb{P}_{K}^{1}$ of $Z$ induced
by the projection $\mathrm{pr}_{Z}:S\rightarrow Z$ has a unique proper
base point $p$, which is a $K$-rational point of $S$. Let $\sigma:\tilde{S}\rightarrow S$
be a minimal resolution of the indeterminacies of $\varphi$, consisting
of a finite sequence of blow-ups of $K$-rational points, with successive
exceptional divisors $E_{i}\simeq\mathbb{P}_{K}^{1}$, $i=1,\ldots,n$,
the last one being a section of the resulting $\mathbb{P}^{1}$-fibration
$\tilde{\varphi}=\varphi\circ\sigma:\tilde{S}\rightarrow\overline{Z}$,
and $\mathcal{L}$ be the mobile linear system on $S$ corresponding
to $\varphi:S\dashrightarrow\overline{Z}$. Since $\mathrm{Pic}(S)$
is generated by $-K_{S}$, it follows that $\mathcal{L}$ is contained
in the complete linear system $\left|-\mu K_{S}\right|$ for some
positive integer $\mu$. Letting $\tilde{\mathcal{L}}$ be the proper
transform of $\mathcal{L}$ on $\tilde{S}$, we have 
\[
K_{\tilde{S}}+\frac{1}{\mu}\tilde{\mathcal{L}}=\sigma^{*}(K_{S}+\frac{1}{\mu}\mathcal{L})+\sum_{i=1}^{n}\alpha_{i}E_{i}
\]
for some rational numbers $\alpha_{i}$. Since $E_{n}$ is a section
of $\tilde{\varphi}$ while the divisors $E_{i}$, $i=1,\ldots,n-1$
are contained in fibers of $\tilde{\varphi}$, taking the intersection
with a general $K$-rational fiber $F\simeq\mathbb{P}_{K}^{1}$ of
$\tilde{\varphi}$, we obtain: 
\[
-2=K_{\tilde{S}}\cdot F=(K_{\tilde{S}}+\frac{1}{\mu}\tilde{\mathcal{L}})\cdot F=\sigma^{*}(K_{S}+\frac{1}{\mu}\mathcal{L})\cdot F+\sum_{i=1}^{n}\alpha_{i}E_{i}\cdot F=\alpha_{n}
\]
as $K_{S}+\frac{1}{\mu}\mathcal{L}\sim_{\mathbb{Q}}0$ by the choice
of $\mu$. The pair $(S,\frac{1}{\mu}\mathcal{L})$ is thus not log-canonical
at $p$. By \cite[Theorem 3.1 p. 275]{CPR}, the local intersection
multiplicity $\mathcal{L}_{p}^{2}$ at $p$ of two members of $\mathcal{L}$
over general $K$-rational points of $\overline{Z}$ then satisfies
$\mathcal{L}_{p}^{2}>4\mu^{2}$. But on the other hand, since $p$
is the unique proper base point of $\varphi$, we have $\mathcal{L}_{p}^{2}=\mathcal{L}^{2}=(-\mu K_{S})^{2}=d\mu^{2}$,
a contradiction since $d\leq4$ by hypothesis. \end{proof}
\begin{rem}
The main ingredient of the proof of Proposition \ref{prop:noCylleq4} lies in an application of Corti's version of Pukhlikov's $4n^2$ inequality to the pair $(S,\frac{1}{\mu}\mathcal{L})$, which is not log-canonical at $p$. We could  have also infered the same result for the case $d \leqq 3$ from the fact that any smooth del Pezzo surface $X$ of degree $d \leqq 3$ defined over an algebraically closed field  of characteristic zero does not have an anti-canonically polar cylinder \cite{CPW,KPZ3}. Indeed, since the Picard group of $S$ is generated by $-K_S$, every $\mathbb{A}^1_K$-cylinder $U\subset S$ is automatically anti-canonically polar. The existence of such a cylinder would imply in turn that the base extension $X=S_{\overline{K}}$ of $S$ to the algebraic closure $\overline{K}$ of $K$ has an anti-canonically polar $\mathbb{A}_{\overline{K}}^1$-cylinder in contradiction to the aforementioned fact. Note that this argument is no longer applicable to the case $d=4$, as every smooth del Pezzo surface defined over an algebraically closed field of characteristic zero does contain anti-canonically polar cylinders as explained in Example \ref{Ex:AntiPol-cyl-4} below. \end{rem}
\begin{example}
\label{Ex:AntiPol-cyl-4}Every smooth del Pezzo surface $X$ of degree $4$ defined over an algebraically closed field of characteristic zero is isomorphic to the blow-up $h: S \rightarrow \mathbb{P}^2$ of $\mathbb{P}^2$ at five points in general position, say $p_1, \cdots , p_5$. Let $C$ be the smooth conic passing through all of $p_i$'s and let $\ell$ be the tangent line to $C$ at a point $q$ distinct  from the $p_i$'s. For a rational number $\epsilon >0$, we have \[ -K_S \sim_{\mathbb{Q}} (1+\epsilon) C' + (1-2 \epsilon)\ell' + \epsilon \sum_{i=1}^5 E_i, \] where $C'$ and $\ell'$ are the respective proper transforms of $C$ and $\ell$, and $E_i:=h^{-1}(p_i)$ ($1\leqq i \leqq 5)$. For sufficiently small $\epsilon$, the right hand side of the above expression is an effective divisor and on the other hand, we have: \[ S \backslash (C' \cup \ell' \cup E_1 \cup \cdots \cup E_5) \cong \mathbb{P}^2 \backslash (C \cup \ell) \cong {(\mathbb{A}_*^1)} \times \mathbb{A}^1. \] It follows that $S$ has a $(-K_S)$-polar $\mathbb{A}^1$-cylinder, actually a one-dimensional family of such cylinders parametrized by the points $q\in C\setminus\{p_1,\ldots,p_5\}$.  
\end{example}

\subsection{\label{sub:cylinders-exist} $\mathbb{A}^{1}$-cylinders in del Pezzo
surfaces of degree $d\geq5$}

By virtue of \cite[Theorem 29.4]{Manin}, a smooth del Pezzo surface
of degree $d\geq5$ and arbitrary Picard number is rational if and
only if it admits a $K$-rational point. On the other hand it follows
from Proposition \ref{cor:Rational-DP} that for such surfaces with
Picard number one, rationality is a necessary condition for the existence
of an $\mathbb{A}_{K}^{1}$-cylinder. The following result implies
in particular these two properties are in fact equivalent: 
\begin{prop}
\label{prop:CylGeq5} A smooth del Pezzo surface $S$ defined over
a field $K$ of characteristic zero $0$, of degree $d\geq5$ and
with a $K$-rational point admits an $\mathbb{A}_{K}^{1}$-cylinder.\end{prop}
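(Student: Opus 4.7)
The plan is to combine Manin's rationality criterion with a direct transfer of the standard cylinder structure of $\mathbb{P}_K^2$ along a birational equivalence. By \cite[Theorem 29.4]{Manin}, already recalled in the paragraph preceding the statement, a smooth del Pezzo surface of degree $d\geq 5$ defined over $K$ with a $K$-rational point is $K$-rational. I would therefore start by fixing a birational $K$-map $\psi:\mathbb{P}_K^2 \dashrightarrow S$.

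To produce the cylinder I would exploit the pencil of lines through a generic $K$-point of $\mathbb{P}_K^2$. Concretely, pick a $K$-rational point $o \in \mathbb{P}_K^2$ lying outside the fundamental locus of $\psi$ (possible since $K$ is infinite in characteristic zero), together with a $K$-rational line $L \subset \mathbb{P}_K^2$ avoiding $o$. The linear projection $\pi_o:\mathbb{P}_K^2 \dashrightarrow \mathbb{P}_K^1$ from $o$ is a $\mathbb{P}^1$-fibration on $\mathbb{P}_K^2 \setminus \{o\}$, and $L$ provides a $K$-rational section of $\pi_o$, since each line through $o$ meets $L$ transversally at a single $K$-rational point. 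The composition
\[
f := \pi_o \circ \psi^{-1} : S \dashrightarrow \mathbb{P}_K^1
\]
is then a dominant rational map whose generic fiber is $K(\mathbb{P}_K^1)$-birational to that of $\pi_o$, namely to $\mathbb{P}_{K(\mathbb{P}_K^1)}^1$, and the proper transform of $L$ under $\psi$ supplies a rational section of $f$. Being a smooth projective curve of genus zero over $K(\mathbb{P}_K^1)$ equipped with a $K(\mathbb{P}_K^1)$-rational point, the generic fiber of $f$ is isomorphic to $\mathbb{P}_{K(\mathbb{P}_K^1)}^1$, and its complement in the section equals $\mathbb{A}_{K(\mathbb{P}_K^1)}^1$, an $\mathbb{A}^1$-cylinder in the generic fiber. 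Lemma \ref{lem:Vertical-gen} then produces a vertical $\mathbb{A}_K^1$-cylinder in $S$ with respect to $f$, which is in particular the desired $\mathbb{A}_K^1$-cylinder.

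The main (rather mild) obstacle is to ensure that the proper transform of $L$ is genuinely a rational section of $f$ and is not contracted by $\psi$; this is handled by choosing $L$ to meet the fundamental locus of $\psi$ in a finite generic set and then restricting to a suitable Zariski-open subset of $S$ on which $f$ is regular. Apart from this technicality, the construction is simply the transfer of the well-known $\mathbb{P}^1$-bundle structure of $\mathbb{P}_K^2 \setminus \{o\}$ along the chosen birational equivalence, so that all the work is concentrated in the application of \cite[Theorem 29.4]{Manin} guaranteeing $K$-rationality.
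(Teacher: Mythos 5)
There is a genuine gap at the final step: open cylinders do not transfer along birational maps, and your application of Lemma \ref{lem:Vertical-gen} conceals exactly this. That lemma requires a \emph{morphism}, so you must first replace $f=\pi_o\circ\psi^{-1}$ by its restriction to its domain of definition $S^0\subset S$; but then the generic fiber of $f|_{S^0}$ is not the smooth projective curve $\mathbb{P}^1_F$ (where $F=K(\mathbb{P}^1)$) that your genus-zero argument produces. It is $\mathbb{P}^1_F$ punctured at one closed point for every horizontal curve of a resolution $\tilde S\to S$ of $f$ that is contracted to a point of $S$ -- in particular at (the places over) every curve of $\mathbb{P}^2$ contracted by $\psi$, since for a general choice of $o$ these are not members of the pencil of lines through $o$. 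Once there are two or more such punctures (or a single puncture of degree $>1$), no open subset of the generic fiber is isomorphic to $\mathbb{A}^1_F$ (a unit or degree count on open subsets of $\mathbb{P}^1_F$ rules it out), so Lemma \ref{lem:Vertical-gen} yields nothing. Note that when $\rho_K(S)=1$, which is the essential case, $f$ can never be a morphism, so this problem cannot be avoided by shrinking; and your ``mild obstacle'' paragraph about the line $L$ not being contracted addresses the existence of a section, which is not the issue. A concrete test case: take $S=\mathbb{P}^2_K$ and $\psi$ a quadratic Cremona transformation; then $f$ is given by a pencil of conics through four points, its generic fiber over the domain of definition has four punctures, and the construction produces no cylinder even though $\mathbb{P}^2$ obviously contains one.

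What is missing is precisely the content of the paper's argument: one must choose the birational map and the pencil so that the \emph{only} horizontal curves collapsed or removed are the ones bounding the prospective cylinder, equivalently so that the induced pencil on $S$ has a unique proper base point, which is $K$-rational, and the complement of an explicit closed curve of $S$ defined over $K$ maps isomorphically onto the model cylinder (compare Lemma \ref{lem:Rational-Cylinder}, which shows these conditions are in fact forced). The paper achieves this by explicit case-by-case constructions for $d=9,8,6,5$: blow up the given $K$-rational point and contract a Galois-stable set of pairwise disjoint $(-1)$-curves through it, landing on $\mathbb{P}^2_K$ or a $K$-form of a quadric, where a pencil with a single $K$-rational base point (tangent line to a conic, resp.\ the $(1,1)$-curves through the image point) restricts to a trivial $\mathbb{A}^1$-bundle over $\mathbb{A}^1_{*,K}$ on an open set untouched by the contractions. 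Your reduction to Manin's rationality theorem alone cannot substitute for this: as written, the argument uses nothing about $S$ beyond $K$-rationality and would apply verbatim to any rational surface, with the same unaddressed failure.
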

\begin{proof}
We only treat the case where $S$ has Picard number one, the other
cases are similar and left to the reader. Building on the proof of
\cite[Theorem 29.4]{Manin}, we exhibit below an explicit $\mathbb{A}_{K}^{1}$-cylinder
in $S$ for each $d=9$, $8$, $6$ and $5$ respectively.

(i) If $d=9$ then by \cite[29.4.4 (i)]{Manin}, $S$ is isomorphic
over $K$ to $\mathbb{P}_{K}^{2}$ and we obtain an $\mathbb{A}_{K}^{1}$-cylinder
$U\simeq\mathbb{A}_{K}^{2}\simeq\mathbb{A}_{K}^{1}\times\mathbb{A}_{K}^{1}$
by taking the complement of a line $L\simeq\mathbb{P}_{K}^{1}$.

(ii) If $d=8$, then since $\rho(S)=1$, it follows from \cite[29.4.4 (ii)]{Manin}
that $S$ is isomorphic to a smooth quadric in $\mathbb{P}_{K}^{3}$
which is a nontrivial $K$-form of $\mathbb{P}_{K}^{1}\times\mathbb{P}_{K}^{1}$,
i.e., $S$ is not isomorphic to $\mathbb{P}_{K}^{1}\times\mathbb{P}_{K}^{1}$
but the base extension $S_{\overline{K}}$ of $S$ to the algebraic
closure $\overline{K}$ of $K$ is isomorphic to $\mathbb{P}_{\overline{K}}^{1}\times\mathbb{P}_{\overline{K}}^{1}$.
Given a $K$-rational point $p$ of $S$, the union in $S_{\overline{K}}\simeq\mathbb{P}_{\overline{K}}^{1}\times\mathbb{P}_{\overline{K}}^{1}$
of the fibers of the first and second projections passing through
$p$ is a curve $C$ defined over $K$. Letting $g:S'\rightarrow S$
be the blow-up of $S$ at $p$, with exceptional divisor $E\simeq\mathbb{P}_{K}^{1}$,
the proper transform of $C$ in $S'_{\overline{K}}$ is a pair of
disjoint $(-1)$-curves whose union is defined over $K$. The composition
$\varphi:S\dashrightarrow\mathbb{P}_{K}^{2}$ of $g^{-1}$ with the
contraction of the proper transform of $C$ is thus a rational map
defined over $K$, mapping $E$ to a line and restricting to an isomorphism
between $U=S\setminus C\simeq S'\setminus(C\cup E)$ and the complement
of the image of $E$ in $\mathbb{P}_{K}^{2}$, which isomorphic to
$\mathbb{A}_{K}^{2}$. 

(iii) If $d=6$, then $S_{\overline{K}}$ is isomorphic to the blow-up
$\tau:S_{\overline{K}}\rightarrow\mathbb{P}_{\overline{K}}^{2}$ of
$\mathbb{P}_{\overline{K}}^{2}$ at three points $x_{1}$, $x_{2}$,
$x_{3}$ in general position. The union $D$ of all the $(-1)$-curves
in $S_{\overline{K}}$ is defined over $K$, and consists of the exceptional
divisors of $\tau$ and the proper transforms of the lines in $\mathbb{P}_{\overline{K}}^{2}$
passing through $x_{i}$ and $x_{j}$, $1\leq i<j\leq3$. Since $\rho(S)=1$
it follows from \cite[29.4.4 (iv)]{Manin} that a $K$-rational point
$p$ of $S$ is necessarily supported outside of $D$. Let $g:S'\rightarrow S$
be the blow-up of a $K$-rational point $p$, with exceptional divisor
$E_{0}\simeq\mathbb{P}_{K}^{1}$. Then $S'$ is a del Pezzo surface
of degree $5$, and the union $D'$ of all $(-1)$-curves in $S'_{\overline{K}}$
consists of $E_{0,\overline{K}}$ , the proper transform of $D$ and
the proper transforms $\ell_{i}$ by $\tau\circ g_{\overline{K}}$
of the lines in $\mathbb{P}_{\overline{K}}^{2}$ passing through $\tau(p)$
and $x_{i}$, $i=1,\ldots,3$. By construction, $E_{0,\overline{K}}$
is invariant under the action of the Galois group $\mathrm{Gal}(\overline{K}/K)$
on $S'_{\overline{K}}$ and $\ell_{1}$, $\ell_{2}$, $\ell_{3}$
are the only $(-1)$-curves in $S'_{\overline{K}}$ intersecting $E_{0,\overline{K}}$.
The union $\ell_{1}\cup\ell_{2}\cup\ell_{3}$ is thus defined over
$K$, and since these curves are disjoint in $S'_{\overline{K}}$,
they can be simultaneously contracted, giving rise to a birational
morphism $h:S'\rightarrow S''$ defined over $K$ onto a smooth del
Pezzo surface $S''$ of Picard number one and degree $8$, hence again
a nontrivial $K$-form of $\mathbb{P}_{K}^{1}\times\mathbb{P}_{K}^{1}$,
containing at least a $K$-rational point $q$ supported on the image
$E_{0}''$ of $E_{0}$. The image of $E''_{0,\overline{K}}$ in $S''_{\overline{K}}$
is an irreducible curve of type $(1,1)$ in the divisor class group
$\mathrm{Cl}(S''_{\overline{K}})\simeq\mathbb{Z}^{2}$ of $S''_{\overline{K}}$.
The union of the fibers of the first and second projection passing
through $q$ in $S''_{\overline{K}}$ is another curve $C$ of type
$(1,1)$ defined over $K$. So $E''_{0,\overline{K}}$ and $C$ generate
a pencil $\varphi:S''\dashrightarrow\mathbb{P}_{K}^{1}$ defined over
$K$, having $q$ as a unique proper base point. The latter restricts
to a trivial $\mathbb{A}_{K}^{1}$-bundle $S''\setminus(E_{0}''\cup C)\rightarrow\mathbb{P}_{K}^{1}\setminus(\varphi(E_{0}'')\cup\varphi(C))\simeq\mathbb{A}_{*,K}^{1}$
over $\mathbb{A}_{*,K}^{1}\simeq\mathrm{Spec}(K[t^{\pm1}])$. By construction,
$g\circ h^{-1}$ induces an isomorphism between $S''\setminus(E_{0}''\cup C)$
and its image $U\subset S$, and $\varphi\circ h\circ g^{-1}:U\rightarrow\mathbb{A}_{*,K}^{1}$
is isomorphic to trivial $\mathbb{A}_{K}^{1}$-bundle $\mathbb{A}_{*,K}^{1}\times\mathbb{A}_{K}^{1}$
over $\mathbb{A}_{*,K}^{1}$. 

(iv) If $d=5$ then $S_{\overline{K}}$ is isomorphic to the blow-up
$\tau:S_{\overline{K}}\rightarrow\mathbb{P}_{\overline{K}}^{2}$ of
$\mathbb{P}_{\overline{K}}^{2}$ at four points $x_{1},\ldots,x_{4}$
in general position. The union $D$ of all the $(-1)$-curves in $S_{\overline{K}}$
is defined over $K$, and consists of the exceptional divisors of
$\tau$ and the proper transforms of the lines in $\mathbb{P}_{\overline{K}}^{2}$
passing through $x_{i}$ and $x_{j}$, $1\leq i<j\leq4$. Since $\rho(S)=1$
it follows from \cite[29.4.4 (v)]{Manin} that a $K$-rational point
$p$ of $S$ is necessarily supported outside of $D$. Let $g:S'\rightarrow S$
be the blow-up of such a $K$-rational point $p$, with exceptional
divisor $E_{0}\simeq\mathbb{P}_{K}^{1}$. Then $S'$ is a del Pezzo
surface of degree $4$, and the union $D'$ of all $(-1)$-curves
in $S'_{\overline{K}}$ consists of $E_{0,\overline{K}}$ , the proper
transform of $D$, the proper transforms $\ell_{i}$ by $\tau\circ g_{\overline{K}}$
of the lines in $\mathbb{P}_{\overline{K}}^{2}$ passing through $\tau(p)$
and $x_{1},\ldots,x_{4}$, and the proper transform $C$ of the unique
smooth conic in $\mathbb{P}_{\overline{K}}^{2}$ passing through $\tau(p)$,
$x_{1},\ldots,x_{4}$. Since $C$, $\ell_{1},\ldots,\ell_{4}$ are
the only $(-1)$-curves intersecting the proper transform of $E_{0,\overline{K}}$,
their union is defined over $K$, and since they are also disjoint,
they can therefore be simultaneously contracted. This yields a birational
morphism $h:S'\rightarrow S''$ defined over $K$ onto a smooth del
Pezzo surface of degree $9$, containing a $K$-rational point supported
on the image $E_{0}''$ of $E_{0}$. So $S''\simeq\mathbb{P}_{K}^{2}$
in which $E_{0}''$ is a smooth conic, with a $K$-rational point
$q$. Then $E_{0}''$ and twice is tangent line $T_{q}(E_{0}'')\simeq\mathbb{P}_{K}^{1}$
at $p$ generate a pencil $\varphi:S''\dashrightarrow\mathbb{P}_{K}^{1}$
defined over $K$, having $q$ as a unique proper base point and whose
restriction to $\mathbb{P}_{K}^{2}\setminus(E_{0}''\cup T_{q}(E_{0}''))$
is a trivial $\mathbb{A}_{K}^{1}$-bundle over $\mathbb{P}_{K}^{1}\setminus(\varphi(E_{0}'')\cup\varphi(T_{q}(E_{0}'')))\simeq\mathbb{A}_{*,K}^{1}$.
By construction, $g\circ h^{-1}$ induces an isomorphism between $S''\setminus(E_{0}''\cup T_{q}(E_{0}''))$
and its image $U\subset S$, and $\varphi\circ h\circ g^{-1}:U\rightarrow\mathbb{A}_{*,K}^{1}$
is isomorphic to trivial $\mathbb{A}_{K}^{1}$-bundle $\mathbb{A}_{*,K}^{1}\times\mathbb{A}_{K}^{1}$
over $\mathbb{A}_{*,K}^{1}$. 
\end{proof}

\subsection{\label{sub:A2-cyl} Complement: $\mathbb{A}^{2}$-cylinders in del
Pezzo surfaces of Picard number one}

It follows from the proof of Proposition \ref{prop:CylGeq5} that
every smooth del Pezzo surface $S$ of degree $d\geq8$ with a $K$-rational
point actually contains $\mathbb{A}_{K}^{2}$ as an open subset. Here
we show in contrast that this is no longer the case for del Pezzo
surfaces of degree $5$ and $6$ with Picard number one, namely:
\begin{prop}
A smooth del Pezzo surface $S$ defined over a field $K$ of characteristic
zero $0$, with Picard number one and of degree $d=5$ or $6$ does
not contain $\mathbb{A}_{K}^{2}$ as an open subset. \end{prop}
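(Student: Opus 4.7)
The plan is to suppose that $\mathbb{A}_K^2$ embeds as an open subset of $S$, let $D:=S\setminus\mathbb{A}_K^2$ be its reduced complement, and derive a contradiction by combining the Picard rank one hypothesis with the Galois action on the $\overline{K}$-components of $D$. Since $\rho_K(S)=1$ and $d\in\{5,6\}$, I first note that $\mathrm{Pic}(S)=\mathbb{Z}\langle -K_S\rangle$: for a generator $H$ with $-K_S=kH$, the relation $k^2H^2=d$ forces $k^2\mid d$, hence $k=1$. So $[D]=m[-K_S]$ for some integer $m\geq 1$. Base-changing to $\overline{K}$, the localization sequence for Picard groups, together with $\mathrm{Pic}(\mathbb{A}^2_{\overline{K}})=0$ and $\mathcal{O}(\mathbb{A}^2_{\overline{K}})^*=\overline{K}^*$, implies that the irreducible components $D_1,\ldots,D_n$ of $D_{\overline{K}}$ form a $\mathbb{Z}$-basis of $\mathrm{Pic}(S_{\overline{K}})$, so $n=10-d$. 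Expanding $-K_{S_{\overline{K}}}=\sum_j c_j[D_j]$ in this basis and comparing with $\sum_j[D_j]=[D_{\overline{K}}]=m\sum_j c_j[D_j]$ forces $m=c_j=1$ for all $j$; in particular $D$ is $K$-irreducible, i.e., the $D_j$ form a single $\mathrm{Gal}(\overline{K}/K)$-orbit.

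By this transitivity, $\delta:=(-K_{S_{\overline{K}}})\cdot D_j$ is independent of $j$, and summing gives $n\delta=(-K_S)^2=d$. For $d=6$, $n=4$ yields the non-integer $\delta=3/2$, which is the desired contradiction.

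For $d=5$ the same formula gives $\delta=1$, and a finer analysis is needed. By adjunction $D_j^2=2p_a(D_j)-1$; if some $p_a(D_j)\geq 1$, then $D_j^2\geq 1$ and $\sum_{i\neq j}D_j\cdot D_i=1-D_j^2\leq 0$, which, since distinct irreducible curves on a smooth surface intersect non-negatively, would force the $D_j$ pairwise disjoint---contradicting the connectedness of $D$ (any effective anti-canonical divisor on a smooth del Pezzo surface is connected, as $H^1(\mathcal{O}_S(K_S))=0$). Hence each $D_j$ is a $(-1)$-curve. Using the classical fact that two distinct $(-1)$-curves on a del Pezzo surface of degree $5$ intersect in either $0$ or $1$ point (their intersection graph being the Petersen graph), the identity $\sum_{i\neq j}D_j\cdot D_i=2$ forces the intersection graph of $D_{\overline{K}}$ to be $2$-regular, simple, and connected on $5$ vertices, hence the $5$-cycle $C_5$. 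A topological Euler characteristic count then gives $\chi(D_{\overline{K}})=2\cdot 5-5=5$, whereas additivity yields $\chi(D_{\overline{K}})=\chi(S_{\overline{K}})-\chi(\mathbb{A}^2_{\overline{K}})=7-1=6$, the contradiction.

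The main obstacle is this $d=5$ case: the clean divisibility obstruction that closes $d=6$ evaporates, and one must combine the lattice-theoretic structure of the ten $(-1)$-curves on the quintic del Pezzo surface (forcing the $C_5$ shape of the boundary) with a topological Euler characteristic count to produce the contradiction.
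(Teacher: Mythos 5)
Your proposal is correct, and up to the $d=5$ endgame it coincides with the paper's argument: both identify the reduced complement $D=S\setminus\mathbb{A}^2_K$ as an effective anticanonical divisor whose geometric components form a $\mathbb{Z}$-basis of $\mathrm{Pic}(S_{\overline K})$ permuted transitively by $\mathrm{Gal}(\overline K/K)$, and both kill $d=6$ by the divisibility/parity clash $4\nmid 6$ for the constant anticanonical degree of the components. Where you genuinely diverge is $d=5$. The paper invokes Morrow's theorem that the boundary of a (minimal normal) compactification of $\mathbb{A}^2$ is a tree of rational curves, so the configuration of the five $(-1)$-curves carries no cycle; combined with transitivity this forces all five to pass through a single common point, and the inequality $5=B^2\geq -5+20c$ gives the contradiction. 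You instead bound $D_i\cdot D_j\leq 1$ for distinct $(-1)$-curves on a quintic del Pezzo surface (the Petersen-graph fact, provable by Hodge index), deduce that the dual graph is the $5$-cycle, and contradict this with the Euler characteristic count $\chi(D_{\overline K})=\chi(S_{\overline K})-\chi(\mathbb{A}^2_{\overline K})=7-1=6\neq 2\cdot 5-5=5$. These are dual uses of the same topological obstruction: your $\chi$-count detects exactly the loop that Morrow's theorem forbids, so you are in effect reproving the relevant special case of Morrow rather than citing it, which makes the argument self-contained (at the cost of needing the specific lattice geometry of $\mathrm{dP}_5$), while the paper's route is shorter given the reference. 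One cosmetic point: since $K$ is an abstract field of characteristic zero, the Euler characteristic should be read as the compactly supported $\ell$-adic one (or reduced to $\mathbb{C}$ by the Lefschetz principle); additivity and the values $\chi(S_{\overline K})=7$, $\chi(\mathbb{A}^2)=1$ hold there, so nothing breaks.
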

\begin{proof}
We proceed by contradiction, assuming that $S$ contains an open subset
$U$ isomorphic to $\mathbb{A}_{K}^{2}$. Since $\mathrm{Pic}(S)$
is generated by $-K_{S}$ and $\mathrm{Pic}(U)$ is trivial, $B=S\setminus U$
is a reduced, irreducible effective anti-canonical divisor on $S$,
defined over $K$. Furthermore, the irreducible components of $B_{\overline{K}}$
must form a basis of the Picard group of $S_{\overline{K}}$ on which
the Galois group $\mathrm{Gal}(\overline{K}/K)$ acts transitively. 

(i) If $d=6$, then $\mathrm{Pic}(S_{\overline{K}})\simeq\mathbb{Z}^{4}$
and we can write $B_{\overline{K}}=B_{1}+B_{2}+B_{3}+B_{4}$, where
$B_{1},\ldots,B_{4}$ are irreducible curves forming a basis of $\mathrm{Pic}(S_{\overline{K}})$
on which $\mathrm{Gal}(\overline{K}/K)$ acts transitively. The equality
\[
6=(-K_{S_{\overline{K}}}^{2})=B_{\overline{K}}^{2}=-\sum_{i=1}^{4}K_{S_{\overline{K}}}\cdot B_{i}
\]
implies that $K_{S_{\overline{K}}}\cdot B_{i}=-1$ for at least one
$i$. The corresponding curve is thus a $(-1)$-curve, and so, the
$B_{i}$ are all $(-1)$-curves. This would imply in turn that $4=6$,
which is absurd. 

(ii) If $d=5$, then $\mathrm{Pic}(S_{\overline{K}})\simeq\mathbb{Z}^{5}$
and similarly as above, we can write $B_{\overline{K}}=\sum_{i=1}^{5}B_{i}$,
where $B_{1},\ldots,B_{5}$ are irreducible curves forming a basis
of $\mathrm{Pic}(S_{\overline{K}})$ on which $\mathrm{Gal}(\overline{K}/K)$
acts transitively. The equality 
\[
5=(-K_{S_{\overline{K}}}^{2})=B_{\overline{K}}^{2}=-\sum_{i=1}^{5}K_{S_{\overline{K}}}\cdot B_{i}
\]
implies again that all the $B_{i}$ are $(-1)$-curves. Since $S_{\overline{K}}\setminus B_{\overline{K}}\simeq\mathbb{A}_{\overline{K}}^{2}$,
it follows from \cite{Morrow} that the support of the total transform
of $B_{\overline{K}}$ in a minimal log-resolution of the pair $(S_{\overline{K}},B_{\overline{K}})$
is a tree of rational curves. Therefore, the support of $B_{\overline{K}}$
is connected and does contain any cycle, and since $\mathrm{Gal}(\overline{K}/K)$
acts transitively on its irreducible components, we conclude that
the curves $B_{i}$ intersect each others in a unique common point.
Letting $c=\min_{1\leq i<j\leq5}\{(B_{i}\cdot B_{j})\}\geq1$ be the
minimum of the intersection number between two distinct components
of $B_{\overline{K}}$, we obtain that 
\[
5=(-K_{S_{\overline{K}}}^{2})=(B_{1}+\cdots+B_{5})^{2}\geq-5+20c,
\]
which is absurd as $c\geq1$. \end{proof}
\begin{cor}
A complex del Pezzo fibration $\pi:V\rightarrow W$ admits a vertical
$\mathbb{A}^{2}$-cylinder if and only if $\deg(V/W)=8$ or $9$ and
$\pi:V\rightarrow W$ has a rational section. 
\end{cor}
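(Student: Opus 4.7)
The plan is to combine Lemma \ref{lem:Vertical-gen} with the results established in the preceding subsections. Lemma \ref{lem:Vertical-gen} identifies the existence of a vertical $\mathbb{A}_{\mathbb{C}}^{2}$-cylinder in $V$ with that of an $\mathbb{A}_{K}^{2}$-cylinder in the generic fiber $S$ of $\pi$; recall that $S$ is a smooth del Pezzo surface of degree $d=\deg(V/W)$ and Picard number one over the function field $K$ of $W$, and that a rational section of $\pi$ corresponds to a $K$-rational point of $S$.

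The first step I would carry out is to observe that, since $S$ is a geometrically integral surface, an $\mathbb{A}_{K}^{2}$-cylinder $U\simeq Z\times\mathbb{A}_{K}^{2}$ open in $S$ forces $Z$ to be zero-dimensional; and as $U$ is then a nonempty open subset of the geometrically irreducible surface $S$, it is itself geometrically irreducible, so $Z\simeq\mathrm{Spec}(K)$ and $U\simeq\mathbb{A}_{K}^{2}$. The corollary thus reduces to deciding for which degrees $d\in\{1,2,3,4,5,6,8,9\}$ the surface $S$ contains $\mathbb{A}_{K}^{2}$ as an open subset.

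For sufficiency when $d=9$, $S\simeq\mathbb{P}_{K}^{2}$ and the complement of a $K$-rational line is $\mathbb{A}_{K}^{2}$; for sufficiency when $d=8$ with a $K$-rational point, the construction in step (ii) of the proof of Proposition \ref{prop:CylGeq5} yields an open subset of $S$ isomorphic to $\mathbb{A}_{K}^{2}$, as already noted at the beginning of subsection \ref{sub:A2-cyl}. For necessity, the case $d\leq4$ follows immediately from Proposition \ref{prop:noCylleq4}, which rules out even an $\mathbb{A}_{K}^{1}$-cylinder; the remaining cases $d=5$ and $d=6$ are precisely what the preceding Proposition excludes.

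I do not anticipate any significant obstacle: the statement is essentially a compilation of results already established, the only new input being the elementary dimension-and-irreducibility argument identifying $\mathbb{A}_{K}^{2}$-cylinders in $S$ with $\mathbb{A}_{K}^{2}$ open subsets of $S$.
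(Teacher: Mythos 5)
Your argument is correct and follows essentially the same route the paper intends: Lemma \ref{lem:Vertical-gen} reduces the statement to the existence of $\mathbb{A}_{K}^{2}$ as an open subset of the generic fiber, which is then settled by the constructions in (i)--(ii) of the proof of Proposition \ref{prop:CylGeq5} for $d\geq 8$, by the proposition of subsection \ref{sub:A2-cyl} for $d=5,6$, and by Proposition \ref{prop:noCylleq4} for $d\leq 4$, with your dimension-and-geometric-irreducibility remark making explicit the reduction from $Z\times\mathbb{A}_{K}^{2}$ to $\mathbb{A}_{K}^{2}$ that the paper leaves implicit. The only point worth stating explicitly is the (trivial) necessity of the rational section: an open subset $\mathbb{A}_{K}^{2}\subset S$ has $K$-rational points, hence forces one.
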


\section{Examples of threefold del Pezzo fibrations containing twisted cylinders}

In what follows we first briefly review the general setup for the
construction of projective completions of $\mathbb{A}_{\mathbb{C}}^{3}$
into total spaces of del Pezzo fibrations $\pi:V\rightarrow\mathbb{P}^{1}$
established in \cite{DK3}. Then we give a detailed construction for
the specific case $d=4$, which was announced without proof in \cite{DK3},
thus completing the proof of Theorem \ref{thm:MainThm2}.

\subsection{General setup and existence results in degree $\leq3$ }

\subsubsection{Step 1: Pencils of del Pezzo surfaces }

We begin with a smooth del Pezzo surface $S$ of degree $d\leq3$
anti-canonically embedded as a hypersurface of degree $e$ in a weighted
projective space $\mathbb{P}=\mathrm{Proj}(\mathbb{C}[x,y,z,w])$.
So $\mathbb{P}$ is equal $\mathbb{P}^{3}$, $\mathbb{P}(1,1,1,2)$
and $\mathbb{P}(1,1,2,3)$ and $e$ is equal to $3$, $4$ or $6$
according as $d=3$, $2$ and $1$. Given a hyperplane $H\in|\mathcal{O}_{\mathbb{P}}(1)|$,
the open subset $U=\mathbb{P}\setminus H$ is isomorphic to $\mathbb{A}_{\mathbb{C}}^{3}$.
We let $\mathcal{L}\subset|\mathcal{O}_{\mathbb{P}}(e)|$ the pencil
generated by $S$ and $eH$ and we denote by $\overline{f}:\mathbb{P}\dashrightarrow\mathbb{P}^{1}$
the corresponding rational map. We let $\infty=\overline{f}_{*}(H)\in\mathbb{P}^{1}$.

\subsubsection{Step 2: Good resolutions }

Next we take a \emph{good resolution} of the indeterminacies of $\overline{f}$,
that is, a triple $(\tilde{\mathbb{P}},\sigma,\tilde{f})$ consisting
of a projective threefold $\tilde{\mathbb{P}}$, a birational morphism
$\sigma:\tilde{\mathbb{P}}\rightarrow\mathbb{P}$ and a morphism $\tilde{f}:\tilde{\mathbb{P}}\rightarrow\mathbb{P}^{1}$
satisfying the following properties:

a) The diagram \[\xymatrix{ \tilde{\mathbb{P}} \ar[r]^{\sigma} \ar[d]_{\tilde{f}} & \mathbb{P} \ar@{-->}[d]^{\overline{f}} \\ \mathbb{P}^1 \ar@{=}[r] & \mathbb{P}^1 }\]
commutes. 

b) $\tilde{\mathbb{P}}$ has at most $\mathbb{Q}$-factorial terminal
singularities and is smooth outside $\tilde{f}^{-1}(\infty)$. 

c) $\sigma:\tilde{\mathbb{P}}\rightarrow\mathbb{P}$ is a sequence
of blow-ups whose successive centers lie above the base locus of $\mathcal{L}$,
inducing an isomorphism $\tilde{\mathbb{P}}\setminus\sigma^{-1}(H)\stackrel{\sim}{\rightarrow}\mathbb{P}\setminus H$,
and whose restriction to every closed fiber of $\tilde{f}$ except
$\tilde{f}^{-1}(\infty)$ is an isomorphism onto its image. 

Such a good resolution $(\tilde{\mathbb{P}},\sigma,\tilde{f})$ of
$f:\mathbb{P}\dashrightarrow\mathbb{P}^{1}$ always exists and can
be for instance obtained by first taking the blow-up $\tau:X\rightarrow\mathbb{P}$
of the scheme-theoretic base locus of $\mathcal{L}$ and then any
resolution $\tau_{1}:\tilde{\mathbb{P}}\rightarrow X$ of the singularities
of $X$. In this case, the triple $(\tilde{\mathbb{P}},\tau\circ\tau_{1},\overline{f}\circ\pi\circ\tau_{1})$
is a good resolution of $\overline{f}$ for which $\tilde{\mathbb{P}}$
is even smooth.

The definition implies that the generic fiber $\tilde{\mathbb{P}}_{\eta}$
of $\tilde{f}$ is a smooth del Pezzo surface of degree $d$ defined
over the field of rational functions $K$ of $\mathbb{P}^{1}$. The
irreducible divisors in the exceptional locus $\mathrm{Exc}(\sigma)$
of $\sigma$ that are vertical for $\tilde{f}$, i.e. contained in
closed fibers of $\tilde{f}$, are all contained in $\tilde{f}^{-1}(\infty)$.
On the other hand, $\mathrm{Exc}(\sigma)$ contains exactly as many
irreducible horizontal divisors as there are irreducible components
in $H\cap S$, and $\sigma^{-1}(H)$ intersects $\tilde{\mathbb{P}}_{\eta}$
along the curve $D_{\eta}\simeq(H\cap S)\times_{\mathrm{Spec}(\mathbb{C})}\mathrm{Spec}(K)$
which is an anti-canonical divisor on $\tilde{\mathbb{P}}_{\eta}$
with the same number of irreducible components as $H\cap S$. Note
also that by assumption $\tilde{U}=\tilde{\mathbb{P}}\setminus\sigma^{-1}(H)$
is again isomorphic to $\mathbb{A}_{\mathbb{C}}^{3}$.

\subsubsection{Step 3: Relative MMP }

The next step consists in running a MMP $\varphi:\tilde{\mathbb{P}}_{0}=\tilde{\mathbb{P}}\dashrightarrow\tilde{\mathbb{P}}'=\tilde{\mathbb{P}}_{n}$
relative to the morphism $\tilde{f}_{0}=\tilde{f}:\tilde{\mathbb{P}}_{0}\rightarrow\mathbb{P}^{1}$.
Recall \cite[3.31]{KM98} that such a relative MMP consists of a finite
sequence $\varphi=\varphi_{n}\circ\cdots\circ\varphi_{1}$ of birational
maps 
\begin{eqnarray*}
\tilde{\mathbb{P}}_{k-1} & \stackrel{\varphi_{k}}{\dashrightarrow} & \tilde{\mathbb{P}}_{k}\\
\tilde{f}_{k-1}\downarrow &  & \downarrow\tilde{f}_{k}\qquad k=1,\ldots,n,\\
\mathbb{P}^{1} & = & \mathbb{P}^{1}
\end{eqnarray*}
where each $\varphi_{k}$ is associated to an extremal ray $R_{k-1}$
of the closure $\overline{NE}(\tilde{\mathbb{P}}_{k-1}/\mathbb{P}^{1})$
of the relative cone of curves of $\tilde{\mathbb{P}}_{k-1}$ over
$\mathbb{P}^{1}$. Each of these birational maps $\varphi_{k}$ is
either a divisorial contraction or a flip whose flipping and flipped
curves are contained in the fibers of $\tilde{f}_{k-1}$ and $\tilde{f}_{k}$
respectively. The following crucial result established in \cite{DK3}
guarantees that every such relative MMP terminates with a projective
threefold $\tilde{\mathbb{P}}'$ with at most $\mathbb{Q}$-factorial
terminal singularities containing $\mathbb{A}_{\mathbb{C}}^{3}$ as
an open subset. 
\begin{prop}
\label{prop:MMP-preserving} \label{lem:MMP-intermediate-steps} Let
$\mathcal{L}\subset|\mathcal{O}_{\mathbb{P}}(e)|$ be as above and
let $(\tilde{\mathbb{P}},\sigma,\tilde{f})$ be any good resolution
of the corresponding rational map $\overline{f}:\mathbb{P}\dashrightarrow\mathbb{P}^{1}$.
Then every MMP $\varphi:\tilde{\mathbb{P}}\dashrightarrow\tilde{\mathbb{P}}'$
relative to $\tilde{f}:\tilde{\mathbb{P}}\rightarrow\mathbb{P}^{1}$
restricts to an isomorphism $\mathbb{A}_{\mathbb{C}}^{3}\simeq\tilde{\mathbb{P}}\setminus\sigma^{-1}(H)\stackrel{\sim}{\rightarrow}\tilde{\mathbb{P}}'\setminus\varphi_{*}(\sigma^{-1}(H))$. 
\end{prop}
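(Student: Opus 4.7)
My plan is to argue by induction on the number $k$ of elementary steps in the relative MMP $\varphi=\varphi_{n}\circ\cdots\circ\varphi_{1}$. The base case $k=0$ is provided by the construction of the good resolution. For the inductive step, writing $B_{k-1}$ for the proper transform of $\sigma^{-1}(H)$ in $\tilde{\mathbb{P}}_{k-1}$ and assuming inductively that $\tilde{U}_{k-1}:=\tilde{\mathbb{P}}_{k-1}\setminus B_{k-1}\simeq\mathbb{A}_{\mathbb{C}}^{3}$, I would show that the exceptional locus of $\varphi_{k}$---the contracted divisor in the case of a divisorial contraction, the flipping and flipped curves in the case of a flip---is entirely contained in $B_{k-1}$. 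Since $\varphi_{k}$ restricts to an isomorphism away from this exceptional locus, the inductive conclusion then follows.

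The heart of the proof is to establish that the $K$-negative extremal ray $R_{k-1}\subset\overline{NE}(\tilde{\mathbb{P}}_{k-1}/\mathbb{P}^{1})$ giving rise to $\varphi_{k}$ admits a representative curve contained in $B_{k-1}$. Since $\mathrm{Pic}(\tilde{U}_{k-1})=\mathrm{Pic}(\mathbb{A}_{\mathbb{C}}^{3})=0$, $K_{\tilde{\mathbb{P}}_{k-1}}$ is linearly equivalent to a divisor $D_{k-1}$ supported on $B_{k-1}$, and tracking discrepancies along $\sigma$ against the formula $K_{\mathbb{P}}=-\alpha H$ for the weighted projective space shows that $-K_{\tilde{\mathbb{P}}_{k-1}}$ admits an effective representative supported on $B_{k-1}$. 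As $R_{k-1}$ is of fiber-type over $\mathbb{P}^{1}$, a representative curve $C$ lies in some fiber $\tilde{f}_{k-1}^{-1}(t)$; the case $t=\infty$ is immediate because the whole vertical fiber sits inside $B_{k-1}$ by property (c) of the good resolution, combined with preservation through earlier MMP steps.

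For $t\neq\infty$, the restriction of $\sigma_{k-1}$ induces an isomorphism $\tilde{f}_{k-1}^{-1}(t)\simeq S_{t}\in\mathcal{L}$, and the generic fiber $\tilde{\mathbb{P}}_{k-1,\eta}$---obtained as the strict transform of a hypersurface of degree $e$ in the weighted projective space $\mathbb{P}_{K}$ over the function field $K$ of $\mathbb{P}^{1}$---has Picard rank one over $K$. Hence any $K$-negative curve class in the generic fiber is numerically proportional to $-K_{\tilde{\mathbb{P}}_{k-1,\eta}}$, but contracting or flipping such a class relatively would collapse the entire generic fiber, contradicting that $\varphi_{k}$ is a birational map over $\mathbb{P}^{1}$. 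Thus $R_{k-1}$ must be concentrated in a strict special fiber; if $C$ were not contained in $B_{k-1}$ it would deform horizontally and produce a $K$-negative class in the generic fiber, again contradicting the rank-one property. This forces $C$ into the horizontal exceptional divisors of $\sigma_{k-1}$, which are themselves components of $B_{k-1}$.

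The main technical obstacle is the rigorous verification of the Picard-rank-one property for the generic fiber, together with the deformation argument, compatibly tracked through the successive MMP steps. In particular, one must ensure that the horizontal components of $B_{k-1}$ continue to account for all $K$-negative extremal rays supported in strictly special fibers over $t\neq\infty$, which calls for a careful intersection-theoretic analysis of the Du Val singularities occurring in singular members of $\mathcal{L}$ and the way they are resolved by $\sigma$.
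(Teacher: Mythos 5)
There is a genuine gap, and it is at the heart of your argument. Note first that the paper itself offers no proof of this proposition: it is quoted as ``established in \cite{DK3}'', so there is no in-paper argument to compare with; on its own terms, however, your proposal fails. The pivotal claim that each intermediate generic fiber $\tilde{\mathbb{P}}_{k-1,\eta}$ has Picard rank one over $K$ is false, and so is the conclusion you draw from it, namely that every extremal ray of the relative MMP is supported in special closed fibers. The generic fiber of a good resolution is a smooth del Pezzo surface over $K$ containing the curve $D_{\eta}=\sigma^{-1}(H)\cap\tilde{\mathbb{P}}_{\eta}$ together with the hyperplane class, so its $K$-Picard rank is in general at least $2$; concretely, in the situation of Proposition \ref{prop:dp4-cylinder} it is a cubic surface over $K$ containing the $(-1)$-curve $L_{\eta}$ and the conic $C_{\eta}$, and the very first step of the MMP constructed there is the divisorial contraction of the \emph{horizontal} divisor $E_{6}$, which restricts on the generic fiber to the contraction of $L_{\eta}$. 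If your rank-one argument were correct, no relative MMP could ever modify the generic fiber, contradicting Theorem \ref{thm:MMP-outputs} b) (where the degree jumps from $2$ to $3$, so a horizontal divisor is contracted) and Proposition \ref{prop:dp4-cylinder} itself. Relative Picard rank one holds only for the output Mori fiber space, not along the way.

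Beyond this, two further steps are not justified. For rays supported in a fiber over $t\neq\infty$, the assertion that a contracted curve not contained in $B_{k-1}$ ``would deform horizontally'' has no argument behind it: flipping curves and $(-1)$-curves in degenerate fibers are typically rigid, so no contradiction with the (in any case false) rank-one property of the generic fiber is obtained. And exhibiting \emph{one} representative curve of $R_{k-1}$ inside $B_{k-1}$ is not enough for your inductive step: you need the whole exceptional locus of $\varphi_{k}$ to lie in $B_{k-1}$, and, to get an isomorphism onto $\tilde{\mathbb{P}}'\setminus\varphi_{*}(\sigma^{-1}(H))$ rather than merely an open immersion, you must also control the other side --- the flipped curves and the images of contracted boundary divisors have to land inside the transformed boundary (in the degree-$4$ example this uses that $L_{\eta}$ meets $C_{\eta}$, so the image curve of $E_{6}$ lies in the image of $E_{3}$). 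These are precisely the statements the paper quotes from \cite{DK3} immediately after the proposition (each $\varphi_{k}$ restricted to a closed fiber other than $\tilde{f}_{k-1}^{-1}(\infty)$ is an isomorphism or contracts disjoint $(-1)$-curves of the boundary; flipping and flipped curves lie in the fibers over $\infty$), and your proposal supplies no proof of them.
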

In particular, the restriction of every MMP $\varphi:\tilde{\mathbb{P}}\dashrightarrow\tilde{\mathbb{P}}'$
relative to $\tilde{f}:\tilde{\mathbb{P}}\rightarrow\mathbb{P}^{1}$
between the generic fibers $\tilde{\mathbb{P}}_{\eta}$ and $\tilde{\mathbb{P}}_{\eta}'$
of $\tilde{f}:\tilde{\mathbb{P}}\rightarrow\mathbb{P}^{1}$ and $\tilde{f}':\tilde{\mathbb{P}}\rightarrow\mathbb{P}^{1}$
respectively is either an isomorphism onto its image, or the contraction
of a finite sequence of successive $(-1)$-curves among the irreducible
components of $D_{\eta}=\tilde{\mathbb{P}}_{\eta}\cap\sigma^{-1}(H)$.
It was shown in addition in \cite{DK3} that for every $k=1,\ldots,n$,
the restriction of $\varphi_{k}$ to every closed fiber of $\tilde{f}_{k-1}:\tilde{\mathbb{P}}_{k-1}\rightarrow\mathbb{P}^{1}$
distinct from $\tilde{f}_{k-1}^{-1}(\infty)$ is either an isomorphism
onto the corresponding fiber of $\tilde{f}_{k}:\tilde{\mathbb{P}}_{k}\rightarrow\mathbb{P}^{1}$
or the contraction of finitely many disjoint $(-1)$-curves. In particular,
in the case where $\varphi_{k}:\tilde{\mathbb{P}}_{k-1}\dashrightarrow\tilde{\mathbb{P}}_{k}$
is a flip, then all its flipping and flipped curves are contained
in $\tilde{f}_{k-1}^{-1}(\infty)$ and $\tilde{f}_{k}^{-1}(\infty)$
respectively.

\subsubsection{Step 4: Determination of the possible outputs}

Since a general member of a pencil $\mathcal{L}\subset|\mathcal{O}_{\mathbb{P}}(e)|$
is a rational surface, the output $\tilde{\mathbb{P}}'$ of a relative
MMP $\varphi:\tilde{\mathbb{P}}\dashrightarrow\tilde{\mathbb{P}}'$
ran from a good resolution $(\tilde{\mathbb{P}},\sigma,\tilde{f})$
of the corresponding rational map $\overline{f}:\mathbb{P}\dashrightarrow\mathbb{P}^{1}$
is a Mori fiber space $\tilde{f}':\tilde{\mathbb{P}}'\rightarrow\mathbb{P}^{1}$.
So $\tilde{f}':\tilde{\mathbb{P}}'\rightarrow\mathbb{P}^{1}$ is either
a del Pezzo fibration with relative Picard number one, whose degree
is fully determined by that of the initial del Pezzo surface $S\subset\mathbb{P}$
and the number of $(-1)$-curves in the generic fiber of $\tilde{f}:\tilde{\mathbb{P}}\rightarrow\mathbb{P}^{1}$
contracted by $\varphi$, or it factors through a Mori conic bundle
$\xi:\tilde{\mathbb{P}}'\rightarrow X$ over a certain normal projective
surface $q:X\rightarrow\mathbb{P}^{1}$. The following theorem established
in \cite{DK3} shows that except maybe in the case where $d=3$ and
$H\cap S$ consists of two irreducible components, the structure of
$\tilde{\mathbb{P}}'$ depends only on the base locus of $\mathcal{L}$.
In particular, it depends neither on the chosen good resolution $(\tilde{\mathbb{P}},\sigma,\tilde{f})$
nor on a particular choice of a relative MMP $\varphi:\tilde{\mathbb{P}}\dashrightarrow\tilde{\mathbb{P}}'$. 
\begin{thm}
\label{thm:MMP-outputs} Let \textup{$\mathcal{L}\subset|\mathcal{O}_{\mathbb{P}}(e)|$}
be the pencil generated by a smooth del Pezzo surface $S\subset\mathbb{P}$
of degree $d\in\{1,2,3\}$ and $eH$ for some $H\in|\mathcal{O}_{\mathbb{P}}(1)|$,
let $(\tilde{\mathbb{P}},\sigma,\tilde{f})$ be a good resolution
of the corresponding rational map $\overline{f}:\mathbb{P}\dashrightarrow\mathbb{P}^{1}$,
and let $\varphi:\tilde{\mathbb{P}}\dashrightarrow\tilde{\mathbb{P}}'$
be a relative MMP. Then the following hold:

a) If $H\cap S$ is irreducible, then $\tilde{f}':\tilde{\mathbb{P}}'\rightarrow\mathbb{P}^{1}$
is a del Pezzo fibration of degree $d$. 

b) If $d=2$ and $H\cap S$ is reducible, then $\tilde{f}':\tilde{\mathbb{P}}'\rightarrow\mathbb{P}^{1}$
is del Pezzo fibration of degree $d+1=3$. 

c) If $H\cap S$ has three irreducible components, then $\tilde{f}':\tilde{\mathbb{P}}'\rightarrow\mathbb{P}^{1}$
factors through a Mori conic bundle $\xi:\tilde{\mathbb{P}}'\rightarrow X$
over a normal projective surface $q:X\rightarrow\mathbb{P}^{1}$. 
\end{thm}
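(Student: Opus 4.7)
The approach is to analyze the MMP $\varphi$ via its restriction to the generic fiber $\tilde{\mathbb{P}}_\eta$, a smooth del Pezzo surface of degree $d$ over $K$, using the description recalled just before the statement: this restriction is either an isomorphism onto its image or a finite sequence of contractions of $(-1)$-curves among the irreducible components of the anti-canonical divisor $D_\eta = \tilde{\mathbb{P}}_\eta \cap \sigma^{-1}(H)$. A preliminary observation, stemming from the identity $S_\lambda \cap H = S \cap H$ (scheme-theoretically) for every member $S_\lambda$ of the pencil $\mathcal{L}$, is that $D_\eta$ has the same number of irreducible components as $H \cap S$, each one being defined individually over $K$ as the restriction to the generic fiber of a horizontal exceptional divisor of $\sigma$.

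For part (a), irreducibility of $D_\eta$ together with $D_\eta \sim -K_{\tilde{\mathbb{P}}_\eta}$ gives $D_\eta^2 = d \geq 1 > 0$, so $D_\eta$ is not a $(-1)$-curve, and hence $\varphi$ restricts to an isomorphism on the generic fiber. Thus $\tilde{\mathbb{P}}'_\eta$ is again a smooth del Pezzo of degree $d \leq 3$, and such a surface is not geometrically ruled. The Mori fiber space $\tilde{f}': \tilde{\mathbb{P}}' \to \mathbb{P}^1$ can therefore not factor through a conic bundle over a higher-dimensional base, so $\tilde{f}'$ itself must be the Mori contraction, giving a del Pezzo fibration of degree $d$.

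For parts (b) and (c), write $D_\eta = \sum_i C_i$. Ampleness of $-K_{\tilde{\mathbb{P}}_\eta}$ together with $-K \cdot D_\eta = d$ forces $-K \cdot C_i \geq 1$ for every $i$, so the number of components is at most $d$; in particular three components forces $d = 3$ and $-K \cdot C_i = 1$ for each $i$, and the same equality holds in case (b) by symmetry of the two components. Adjunction on each geometrically rational $C_i$ then gives $C_i^2 = -1$, so each $C_i$ is a $K$-rational $(-1)$-curve; combining this with $D_\eta^2 = d$ yields $C_1 \cdot C_2 = 2$ in (b) and $C_i \cdot C_j = 1$ for $i \neq j$ in (c). The blow-down formula then shows that, upon contracting any single $C_i$, the images $\overline{C}_j$ of the remaining components cease to be $(-1)$-curves (with $\overline{C}_j^2 = 3$ in (b) and $\overline{C}_j^2 = 0$ in (c)), so no further generic-fiber contraction is possible once the first has occurred.

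It remains to identify the Mori fiber space structure of the output. In (b), a direct intersection-form computation using $C_i^2 = -1$ and $C_1 \cdot C_2 = 2$ shows that no class $F = aC_1 + bC_2$ satisfies $F^2 = 0$ and $-K \cdot F = 2$ simultaneously, as the system forces $ab = 2/3$; hence $\tilde{\mathbb{P}}_\eta$ carries no $K$-rational conic bundle structure, the MMP must contract one $C_i$, and the resulting Mori fiber space is a del Pezzo fibration of degree $3$ with irreducible $D'_\eta$. In (c), the image $\overline{C}_j$ of a non-contracted component is itself a $K$-rational conic class (self-intersection $0$, $-K$-degree $2$) on the resulting degree-$4$ del Pezzo $\tilde{\mathbb{P}}'_\eta$, giving it Picard rank at least $2$ over $K$; hence $\tilde{f}'$ cannot be a del Pezzo fibration and must factor through a conic bundle $\xi: \tilde{\mathbb{P}}' \to X$ over a normal surface $q: X \to \mathbb{P}^1$, induced by the pencil $|\overline{C}_j|$. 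Independence of the output from the choices of good resolution and MMP is then immediate, since the entire argument depends only on the combinatorial structure of $H \cap S$. The main obstacle is the careful $K$-rational lattice analysis needed to rule out spurious Mori fiber space structures in each case.
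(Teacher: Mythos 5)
Your overall strategy---reduce to the generic fiber, use the fact (quoted in the paper) that the relative MMP can only contract $(-1)$-curve components of $D_{\eta}$ there, and then run lattice computations on the components of $D_{\eta}$---is reasonable, and since the paper itself gives no proof of this theorem (it is quoted from \cite{DK3}) I judge the proposal on its own terms. But there is a genuine gap: everything that excludes the ``wrong'' Mori fiber space structure in parts (a) and (b) requires an \emph{upper} bound on the $K$-Picard rank of the generic fiber, namely that $\mathrm{Pic}(\tilde{\mathbb{P}}_{\eta})_{\mathbb{Q}}$ is spanned by the classes of the irreducible components of $D_{\eta}$, and you never state or prove this. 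In (a) you replace it by the assertion that a del Pezzo surface of degree $\leq3$ ``is not geometrically ruled'', which does not do the job: the generic fiber of a Mori conic bundle output is only required to carry a conic fibration over a curve over $K$ (degenerate fibers allowed), and for instance a cubic surface containing a $K$-line does carry such a fibration (residual conics of the planes through the line). What actually excludes the conic bundle output in (a) is $\rho_{K}(\tilde{\mathbb{P}}_{\eta})=1$. Similarly in (b), your computation that no class $aC_{1}+bC_{2}$ satisfies $F^{2}=0$, $-K\cdot F=2$ only rules out conic fibrations whose fiber class lies in the span of $C_{1},C_{2}$; the inference ``hence no $K$-rational conic bundle structure'', and the later claim that after contracting one $C_{i}$ the output is a del Pezzo fibration (which needs $\rho_{K}$ of the contracted surface to be $1$), both silently assume that $\mathrm{Pic}_{K}$ equals that span.

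The missing lemma is true and not hard, but it is the crux: restriction $\mathrm{Pic}(\tilde{\mathbb{P}})_{\mathbb{Q}}\rightarrow\mathrm{Pic}(\tilde{\mathbb{P}}_{\eta})_{\mathbb{Q}}$ is surjective (close up divisors; $\tilde{\mathbb{P}}$ is $\mathbb{Q}$-factorial), $\mathrm{Pic}(\tilde{\mathbb{P}})_{\mathbb{Q}}$ is generated by $\sigma^{*}\mathcal{O}_{\mathbb{P}}(1)$ and the exceptional divisors of $\sigma$, and since every center of $\sigma$ lies over $\mathrm{Bs}(\mathcal{L})\subset H$ while the proper transform of $H$ is vertical, all these generators restrict to divisors supported on $D_{\eta}$; hence $\rho_{K}(\tilde{\mathbb{P}}_{\eta})$ equals the number of components of $H\cap S$, their independence following from the nondegenerate intersection matrices you already computed. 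With this inserted, your arguments for (a) and (b) do go through. Part (c) needs only the lower bound $\rho_{K}\geq2$, so it is essentially correct, except that you implicitly assume the MMP really contracts one of the three lines; you should also treat the case where the generic fiber is left untouched (then $\rho_{K}\geq2$ still holds and the same dichotomy forces the conic bundle output), and the resulting conic bundle is whatever fiber-type extremal contraction the MMP ends with, not necessarily the one induced by $|\overline{C}_{j}|$.
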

\noindent As a consequence, we obtain the following existence result:
\begin{cor}
Let \textup{$\mathcal{L}\subset|\mathcal{O}_{\mathbb{P}}(e)|$} be
the pencil generated by a smooth del Pezzo surface $S\subset\mathbb{P}$
of degree $d\in\{1,2,3\}$ and $eH$ for some $H\in|\mathcal{O}_{\mathbb{P}}(1)|$
such that $H\cap S$ is irreducible. Then for every good resolution
$(\tilde{\mathbb{P}},\sigma,\tilde{f})$ of the corresponding rational
map $\overline{f}:\mathbb{P}\dashrightarrow\mathbb{P}^{1}$ and every
MMP $\varphi:\tilde{\mathbb{P}}\dashrightarrow\tilde{\mathbb{P}}'$
relative to $\tilde{f}:\tilde{\mathbb{P}}\rightarrow\mathbb{P}^{1}$,
the output $\tilde{f}':\tilde{\mathbb{P}}'\rightarrow\mathbb{P}^{1}$
is a del Pezzo fibration of degree $d$ whose total space $\tilde{\mathbb{P}}'$
contains $\mathbb{A}_{\mathbb{C}}^{3}$ as a Zariski open subset. 
\end{cor}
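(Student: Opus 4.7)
The plan is to observe that this corollary is essentially a direct combination of the two structural results stated just above it, so the proof reduces to correctly assembling them. The statement splits into two independent assertions about the output $\tilde{f}':\tilde{\mathbb{P}}'\rightarrow\mathbb{P}^{1}$: first, that it is a del Pezzo fibration of degree $d$; and second, that $\tilde{\mathbb{P}}'$ contains $\mathbb{A}_{\mathbb{C}}^{3}$ as a Zariski open subset. I will treat each one in turn.

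For the first assertion, I would simply invoke Theorem \ref{thm:MMP-outputs}(a). The standing hypothesis that $H\cap S$ is irreducible is exactly the one under which part (a) applies, and it asserts directly that $\tilde{f}':\tilde{\mathbb{P}}'\rightarrow\mathbb{P}^{1}$ is a del Pezzo fibration of degree $d$. There is nothing further to check.

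For the second assertion, I would invoke Proposition \ref{prop:MMP-preserving}, according to which any relative MMP $\varphi:\tilde{\mathbb{P}}\dashrightarrow\tilde{\mathbb{P}}'$ restricts to an isomorphism
\[
\tilde{\mathbb{P}}\setminus\sigma^{-1}(H)\;\stackrel{\sim}{\longrightarrow}\;\tilde{\mathbb{P}}'\setminus\varphi_{*}(\sigma^{-1}(H)).
\]
Combining this with property (c) in the definition of a good resolution, which guarantees an isomorphism $\tilde{\mathbb{P}}\setminus\sigma^{-1}(H)\stackrel{\sim}{\rightarrow}\mathbb{P}\setminus H$, and with the identification $\mathbb{P}\setminus H\simeq\mathbb{A}_{\mathbb{C}}^{3}$ recorded in Step 1 of the general setup, I obtain a chain of isomorphisms whose composition realizes $\mathbb{A}_{\mathbb{C}}^{3}$ as the open subset $\tilde{\mathbb{P}}'\setminus\varphi_{*}(\sigma^{-1}(H))$ of $\tilde{\mathbb{P}}'$, which is the desired conclusion.

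There is no real obstacle here: both of the substantive ingredients have already been proved (or recalled from \cite{DK3}), and the only thing to verify is that Theorem \ref{thm:MMP-outputs}(a) and Proposition \ref{prop:MMP-preserving} apply to every good resolution and every relative MMP without further restriction, which is precisely their stated scope under the irreducibility assumption on $H\cap S$. The corollary is therefore best presented as a one-line deduction, and I would write the proof in exactly that form.
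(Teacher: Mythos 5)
Your proposal is correct and matches the paper's own (implicit) reasoning exactly: the corollary is stated there without proof precisely because it is the immediate combination of Theorem \ref{thm:MMP-outputs}(a) for the degree-$d$ del Pezzo fibration structure and Proposition \ref{prop:MMP-preserving} together with property (c) of a good resolution for the embedding of $\mathbb{A}_{\mathbb{C}}^{3}\simeq\mathbb{P}\setminus H$ as an open subset of $\tilde{\mathbb{P}}'$. Nothing further is needed.
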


\subsection{Existence results in degree $4$}

In Theorem \ref{thm:MMP-outputs}, the remaining case where $S$ is
smooth cubic in $\mathbb{P}^{3}$ and $H\cap S$ consists of two irreducible
components, namely a line $L$ and smooth conic $C$ intersecting
each others twice, is more complicated. Here given a good resolution $(\tilde{\mathbb{P}},\sigma,\tilde{f})$ of the rational map $\overline{f}:\mathbb{P}=\mathbb{P}^{3}\dashrightarrow\mathbb{P}^{1}$, the intersection of $\sigma^{-1}(H)$ with the generic fiber $\tilde{\mathbb{P}}_{\eta}$ of $\tilde{f}:\tilde{\mathbb{P}}\rightarrow\mathbb{P}^{1}$ is a reduced anti-canonical divisor whose support consists of the union of a $(-1)$-curve $L_{\eta}$ and of a $0$-curve $C_{\eta}$ both defined over the function field $K$ of $\mathbb{P}^1$.
By Proposition \ref{prop:MMP-preserving}, the only horizontal divisors
contracted by a relative MMP $\varphi:\tilde{\mathbb{P}}\dashrightarrow\tilde{\mathbb{P}}'$
are irreducible components of $\sigma^{-1}(H)$. In this case, it
follows that $\varphi$ can contract at most the irreducible component
of $\sigma^{-1}(H)$ intersecting $\tilde{\mathbb{P}}_{\eta}$ along
$L_{\eta}$. Indeed, if $L_{\eta}$ is contracted at the certain step
$\varphi_{k}:\tilde{\mathbb{P}}_{k-1}\dashrightarrow\tilde{\mathbb{P}}_{k}$
then the image of $C_{\eta}$ in the generic fiber of $\tilde{f}_{k}:\tilde{\mathbb{P}}_{k}\rightarrow\mathbb{P}^{1}$
is a singular curve with positive self-intersection which therefore
cannot be contracted at any further step $\varphi_{k'}$, $k'\geq k+1$,
of $\varphi$. If $L_{\eta}$ is contracted, then the generic fiber
of the output $\tilde{f}':\tilde{\mathbb{P}}'\rightarrow\mathbb{P}^{1}$
of $\varphi$ is a smooth del Pezzo surface of degree $4$, and it
was established in \cite[Proposition 11]{DK3} that in this case,
$\tilde{f}':\tilde{\mathbb{P}}'\rightarrow\mathbb{P}^{1}$ is in fact
a del Pezzo fibration of degree $4$. The following result was announced
without proof in \cite{DK3}:
\begin{prop}
\label{prop:dp4-cylinder} Let $S\subset\mathbb{P}^{3}$ be a smooth
cubic surface, let $H\in\left|\mathcal{O}_{\mathbb{P}^{3}}(1)\right|$
be a hyperplane intersecting $S$ along the union of a line and smooth
conic, let $\mathcal{L}\subset\left|\mathcal{O}_{\mathbb{P}^{3}}(3)\right|$
be the pencil generated by $S$ and $3H$ and let $\overline{f}:\mathbb{P}^{3}\dashrightarrow\mathbb{P}^{1}$
be the corresponding rational map. Then there exists a good resolution
$(\tilde{\mathbb{P}},\sigma,\tilde{f})$ and a MMP $\varphi:\tilde{\mathbb{P}}\dashrightarrow\tilde{\mathbb{P}}'$
relative to $\tilde{f}:\tilde{\mathbb{P}}\rightarrow\mathbb{P}^{1}$
whose output is a del Pezzo fibration $\tilde{f}':\tilde{\mathbb{P}}'\rightarrow\mathbb{P}^{1}$
of degree $4$.

In particular, $\tilde{f}':\tilde{\mathbb{P}}'\rightarrow\mathbb{P}^{1}$
is a del Pezzo fibration of degree $4$ whose total space $\tilde{\mathbb{P}}'$
contains $\mathbb{A}_{\mathbb{C}}^{3}$ as a Zariski open subset. 
\end{prop}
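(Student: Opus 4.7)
The plan is to construct an explicit good resolution $\sigma\colon\tilde{\mathbb{P}}\to\mathbb{P}^{3}$ of $\overline{f}$ and then exhibit a relative MMP whose net effect on the generic fiber is the contraction of the $(-1)$-curve $L_{\eta}$, so that the final model has generic fiber a smooth del Pezzo surface of degree $4$.

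First I would describe the base scheme of $\mathcal{L}$. Choosing coordinates $[x:y:z:w]$ on $\mathbb{P}^{3}$ with $H=\{w=0\}$, one can write $F=\ell q+wR$ where $\ell$ and $q$ define $L$ and $C$ on $H$ and $R$ is a quadratic form. The scheme-theoretic base locus of $\mathcal{L}=\langle F,w^{3}\rangle$ is then set-theoretically $L\cup C$, with a triple structure along the transversal direction to $H$. I would construct $\sigma$ by first blowing up $L$, then the proper transform of $C$, and then the remaining infinitesimal base points produced over the two points of $L\cap C$, continuing until the proper transform of $\mathcal{L}$ is base-point free. This yields the good resolution $(\tilde{\mathbb{P}},\sigma,\tilde{f})$, and by construction $\tilde{U}=\tilde{\mathbb{P}}\setminus\sigma^{-1}(H)\cong\mathbb{A}^{3}$.

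Next I would identify the two horizontal components of $\sigma^{-1}(H)$: a divisor $D_{L}$ restricting to the $(-1)$-curve $L_{\eta}$ on $\tilde{\mathbb{P}}_{\eta}$, and a divisor $D_{C}$ restricting to the $0$-curve $C_{\eta}$. The remaining components of $\sigma^{-1}(H)$ are vertical, contained in $\tilde{f}^{-1}(\infty)$. Since $L_{\eta}\cdot K_{\tilde{\mathbb{P}}_{\eta}}=-1$, the divisor $D_{L}$ contributes a $K_{\tilde{\mathbb{P}}}$-negative class to the relative Mori cone $\overline{NE}(\tilde{\mathbb{P}}/\mathbb{P}^{1})$. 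The plan is to run a relative MMP in which $D_{L}$ is eventually contracted divisorially: concretely, I would show that either on $\tilde{\mathbb{P}}$ itself, or on an intermediate model reached after finitely many vertical flips and divisorial contractions inside $\tilde{f}^{-1}(\infty)$, there is an extremal ray spanned by a ruling of $D_{L}$ whose contraction is the divisorial contraction of $D_{L}$. A convenient way to realize this is to arrange the order of the blow-ups in $\sigma$ so that $D_{L}$ is birational to a $\mathbb{P}^{1}$-bundle over $L$ whose general ruling spans an extremal $K$-negative ray of $\overline{NE}(\tilde{\mathbb{P}}/\mathbb{P}^{1})$.

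Once $D_{L}$ has been contracted, the generic fiber of the resulting model $\tilde{\mathbb{P}}''$ is obtained from $\tilde{\mathbb{P}}_{\eta}$ by blowing down $L_{\eta}$, hence is a smooth del Pezzo surface of degree $4$. The only remaining horizontal divisor restricts to the image $\overline{C}_{\eta}$ of $C_{\eta}$, which has self-intersection $C_{\eta}^{2}+(C_{\eta}\cdot L_{\eta})^{2}=0+4=4$ on this new generic fiber, and thus cannot be contracted or flipped horizontally. Any continuation of the MMP affects only the special fiber and terminates at a Mori fiber space $\tilde{f}'\colon\tilde{\mathbb{P}}'\to\mathbb{P}^{1}$ whose generic fiber remains the smooth degree-$4$ del Pezzo, ruling out the conic bundle alternative. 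Combined with Proposition~\ref{prop:MMP-preserving}, this gives that $\tilde{\mathbb{P}}'$ contains $\mathbb{A}^{3}$ as a Zariski open subset. The main obstacle, and the core of the proof, lies in the third paragraph: one must analyze the intersection theory of $\tilde{\mathbb{P}}$ along $\sigma^{-1}(L\cup C)$ carefully enough to exhibit an extremal ray that produces the divisorial contraction of $D_{L}$, while ensuring that no competing extremal ray forces a contraction of $D_{C}$ or makes $D_{L}$ non-contractible at some later step of the MMP.
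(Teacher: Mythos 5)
Your overall strategy coincides with the paper's: build an explicit good resolution, arrange for the horizontal component over the line to be contracted divisorially at a step of a relative MMP, and conclude that the output is a degree-$4$ del Pezzo fibration containing $\mathbb{A}^{3}$ via Proposition \ref{prop:MMP-preserving}. The problem is that the one step you yourself identify as ``the main obstacle, and the core of the proof'' is exactly the step you never carry out, and it is the entire content of the paper's argument. The paper does it by a very specific choice of resolution: blow up the conic $C$ and its two infinitesimally near copies $C_{1},C_{2}$ \emph{first}, and only then the line $L_{3}$ and its copies $L_{4},L_{5}$, six blow-ups in all, so that the \emph{last} exceptional divisor $E_{6}$ is the one meeting the generic fiber along $L_{\eta}$. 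One then computes, using $\mathcal{N}_{S_{5}/\mathbb{P}_{5}}\simeq\mathcal{O}_{S_{5}}(L_{5})$ and $\mathcal{N}_{L_{5}/S_{5}}\simeq\mathcal{O}_{\mathbb{P}^{1}}(-1)$, that $\mathcal{N}_{L_{5}/\mathbb{P}_{5}}\simeq\mathcal{O}(-1)\oplus\mathcal{O}(-1)$ and hence that $(E_{6},\mathcal{N}_{E_{6}/\tilde{\mathbb{P}}})\simeq(\mathbb{P}^{1}\times\mathbb{P}^{1},\mathcal{O}(-1,-1))$; this is what produces the extremal ray of $\overline{NE}(\tilde{\mathbb{P}}/\mathbb{P}^{1})$ whose contraction is the divisorial contraction of $E_{6}$ onto a curve, as the \emph{first} step of the MMP. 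Your sketch proposes the opposite order (blow up $L$ first, then $C$, then alleged infinitesimal point centers over $L\cap C$ -- note the paper's resolution needs no point blow-ups at all), and the generic statement that $D_{L}$ ``contributes a $K$-negative class'' does not yield an extremal ray nor a contraction; without the normal-bundle computation, or some substitute for it, the existence of the required MMP step is simply asserted, not proved.

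A secondary gap: your last paragraph dismisses the Mori conic bundle alternative on the grounds that no further horizontal divisor can be contracted, but that is a non sequitur -- the conic bundle output is not produced by a horizontal divisorial contraction, it arises when the generic fiber retains a conic bundle structure over $K$ (i.e.\ $K$-Picard rank at least $2$) at the end of the relative MMP. Ruling this out is not formal; the paper invokes \cite[Proposition 11]{DK3} precisely for the statement that once $L_{\eta}$ is contracted the output is in fact a del Pezzo fibration of degree $4$. Your self-intersection computation $\overline{C}_{\eta}^{2}=4$ is fine and matches the paper's remark that $C_{\eta}$ can no longer be contracted afterwards, but it does not by itself exclude the conic bundle case.
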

\indent The rest of this subsection is devoted to the proof of this
proposition. In view of the above discussion, it is enough to construct
a particular good resolution $\sigma:\tilde{\mathbb{P}}\rightarrow\mathbb{P}^{3}$
of $\overline{f}:\mathbb{P}^{3}\dashrightarrow\mathbb{P}^{1}$ for
which there exists a MMP $\varphi:\tilde{\mathbb{P}}\dashrightarrow\tilde{\mathbb{P}}'$
relative to $\tilde{f}:\tilde{\mathbb{P}}\rightarrow\mathbb{P}^{1}$
whose first step consists of the contraction of the irreducible component
of $\sigma^{-1}(H)$ intersecting the generic fiber of $\tilde{f}$
along the $(-1)$-curve $L_{\eta}$.

\subsubsection{Construction of a particular good resolution $\sigma:\tilde{\mathbb{P}}\rightarrow\mathbb{P}^{3}$ }

Let again $\mathcal{L}\subset\left|\mathcal{O}_{\mathbb{P}^{3}}(3)\right|$
be the pencil generated by a smooth cubic surface $S\subset\mathbb{P}^{3}$
and $3H$, where $H\in\left|\mathcal{O}_{\mathbb{P}^{3}}(1)\right|$
is a hyperplane intersecting $S$ along the union of a line $L$ and
smooth conic $C$. In what follows, starting from $\mathbb{P}_{0}=\mathbb{P}^{3}$,
we construct a resolution of the base locus of $\mathcal{L}$ consisting
of a sequence of blow-ups 
\[
\sigma=\sigma_{1}\circ\cdots\circ\sigma_{6}:\tilde{\mathbb{P}}=\mathbb{P}_{6}\stackrel{\sigma_{6}}{\longrightarrow}\mathbb{P}_{5}\stackrel{\sigma_{5}}{\longrightarrow}\cdots\stackrel{\sigma_{2}}{\longrightarrow}\mathbb{P}_{1}\stackrel{\sigma_{1}}{\longrightarrow}\mathbb{P}_{0}=\mathbb{P}^{3}
\]
along successive smooth centers. To fix the notation, we let $S_i$ and $H_i$ the proper transforms of $S$ and $H$ on $\mathbb{P}_i$, respectively. Similarly, we denote by $\mathcal{L}_i$ the proper transform on $\mathbb{P}_i$ of the pencil $\mathcal{L}$. We denote by $E_i$  the exceptional divisor of $\sigma_i$, and we use the same notation to denote its proper transform on any other $\mathbb{P}_j$.
The base locus $\mathrm{Bs}\mathcal{L}$ of $\mathcal{L}$ is supported
by the union of $C$ and $L$. We proceed in two steps:

Step 1) First we let $\sigma_{1}:\mathbb{P}_{1}\rightarrow\mathbb{P}_{0}$
be the blow-up of $\mathbb{P}_{0}$ with center at $C$. Since $C$
is a Cartier divisor on $S$, $\sigma_{1}$ restricts to an isomorphism
between $S_{1}$ and $S$, in particular $S_{1}$ is again smooth
and $E_{1}\cap S_{1}$ is a smooth conic $C_{1}$ which is mapped
isomorphically onto $C$ by the restriction of $\sigma_{1}$. We have
$S_{1}\sim3H_{1}+2E_{1}$, $\mathcal{L}_{1}$ is spanned by $S_{1}$
and $3H_{1}+2E_{1}$, and $\mathrm{Bs}\mathcal{L}_{1}$ is supported
by the union of $C_{1}$ with the proper transform $L_{1}$ of $L$. 

Next we let $\sigma_{2}:\mathbb{P}_{2}\rightarrow\mathbb{P}_{1}$
be the blow-up of $\mathbb{P}_{1}$ with center at $C_{1}$. Similarly
as in the previous case, $\sigma_{2}$ restricts to an isomorphism
between $S_{2}$ and $S_{1}$, $E_{2}\cap S_{2}$ is a smooth conic
$C_{2}$ which is mapped isomorphically onto $C_{1}$ by $\sigma_{2}$.
Furthermore, $S_{2}\sim3H_{2}+2E_{1}+E_{2}$, the pencil $\mathcal{L}_{2}$
is spanned by $S_{2}$ and $3H_{2}+2E_{1}+E_{2}$, and its base locus
is supported on the union of $C_{2}$ with the proper transform $L_{2}$
of $L_{1}$. 

Then we let $\sigma_{3}:\mathbb{P}_{3}\rightarrow\mathbb{P}_{2}$
be the blow-up of $\mathbb{P}_{2}$ with center at $C_{2}$. Again,
$\sigma_{3}$ restricts to an isomorphism between $S_{3}$ and $S_{2}$.
We have $S_{3}\sim3H_{3}+2E_{1}+E_{2}$, the pencil $\mathcal{L}_{3}$
is spanned by $S_{3}$ and $3H_{3}+2E_{1}+E_{2}$, and $\mathrm{Bs}\mathcal{L}_{3}$
is supported by the line $L_{3}$ which is the proper transform of
$L$ by the isomorphism $S_{3}\stackrel{\sim}{\rightarrow}S$ induced
by $\sigma_{1}\circ\sigma_{2}\circ\sigma_{3}$. 

Step 2). We let $\sigma_{4}:\mathbb{P}_{4}\rightarrow\mathbb{P}_{3}$
be the blow-up of $\mathbb{P}_{3}$ with center at $L_{3}$. A similar
argument as above implies that $S_{4}\sim3H_{4}+2E_{1}+E_{2}+2E_{4}$,
that $\mathcal{L}_{4}$ is spanned by $S_{4}$ and $3H_{4}+2E_{1}+E_{2}+2E_{4}$
and that its base locus is supported on the line $L_{4}=E_{4}\cap S_{4}$
which is mapped isomorphically onto $L_{3}$ by the restriction of
$\sigma_{4}$ to $S_{4}$.

Then we let $\sigma_{5}:\mathbb{P}_{5}\rightarrow\mathbb{P}_{4}$
be the blow-up of $\mathbb{P}_{4}$ with center at $L_{4}$. The pencil
$\mathcal{L}_{5}$ is generated by $S_{5}$ and $3H_{5}+2E_{1}+E_{2}+2E_{4}+E_{5}$,
and its base locus is equal to the line $L_{5}=E_{5}\cap S_{5}$. 

Finally, we let $\sigma_{6}:\mathbb{P}_{6}\rightarrow\mathbb{P}_{5}$
be the blow-up of $\mathbb{P}_{5}$ with center at $L_{5}$. By construction,
$\mathbb{P}_{6}$ is a smooth threefold on which the proper transform
$\mathcal{L}_{6}$ of $\mathcal{L}$ is the base point free pencil
generated by $S_{6}\simeq S$ and $3H_{6}+2E_{1}+E_{2}+2E_{4}+E_{5}$. 

Summing up, we obtained:
\begin{lem}
With the notation above, the following hold:

a) The birational morphism $\sigma=\sigma_{1}\circ\cdots\cdots\sigma_{6}:\tilde{\mathbb{P}}=\mathbb{P}_{6}\rightarrow\mathbb{P}^{3}$
is a good resolution of $\overline{f}:\mathbb{P}^{3}\dashrightarrow\mathbb{P}^{1}$. 

b) The divisors $E_{3}$ and $E_{6}$ are the horizontal irreducible
components of $\sigma^{-1}(H)$, and they intersect the generic fiber
of $\tilde{f}:\tilde{\mathbb{P}}\rightarrow\mathbb{P}^{1}$ along
a $0$-curve $C_{\eta}$ and $(-1)$-curve $L_{\eta}$ respectively. 
\end{lem}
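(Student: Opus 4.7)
The plan is to verify parts (a) and (b) directly from the explicit sequence of six blow-ups constructed in Step~1--Step~2, adding two pieces of argument: (i) a uniform Cartier-divisor argument showing that each $\sigma_i$ restricts to an isomorphism on every member of the pencil other than the infinity one, and (ii) an adjunction computation identifying the self-intersections of the two horizontal curves cut out on the generic fiber.

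For part (a), the smoothness of $\tilde{\mathbb{P}}=\mathbb{P}_6$ is immediate, as each $\sigma_i$ is the blow-up of a smooth threefold along a smooth curve; in particular the $\mathbb{Q}$-factorial terminal hypothesis holds vacuously. The base-point-freeness of $\mathcal{L}_6$ at the end of Step~2 ensures that $\overline{f}\circ\sigma$ extends to a morphism $\tilde{f}\colon\tilde{\mathbb{P}}\to\mathbb{P}^{1}$ fitting into the required commuting diagram. Each successive center $C,C_1,C_2,L_3,L_4,L_5$ lies above $\mathrm{Bs}\,\mathcal{L}=C\cup L\subset H$, so $\sigma$ restricts to an isomorphism $\tilde{\mathbb{P}}\setminus\sigma^{-1}(H)\stackrel{\sim}{\to}\mathbb{P}^{3}\setminus H$. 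The remaining condition---that $\sigma$ restricts to an isomorphism on every closed fiber other than $\tilde{f}^{-1}(\infty)$---I would prove inductively on $i$: the explicit description of $\mathrm{Bs}\,\mathcal{L}_{i-1}$ from Step~1--Step~2 shows that the center $Z_i\in\{C_{i-1},L_{i-1}\}$ of $\sigma_i$ lies in $\mathrm{Bs}\,\mathcal{L}_{i-1}$, hence in every member $S_{t,i-1}$ of the pencil. Since $Z_i$ is a smooth curve on the smooth surface $S_{t,i-1}$, it is a Cartier divisor there, and so the restriction $\sigma_i\colon S_{t,i}\to S_{t,i-1}$---which is the blow-up of $S_{t,i-1}$ along the Cartier divisor $Z_i$---is an isomorphism.

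For part (b), the support of $\sigma^{-1}(H)$ is $H_6\cup E_1\cup\cdots\cup E_6$. Reading off the divisor class $3H_6+2E_1+E_2+2E_4+E_5$ of the infinity member of $\mathcal{L}_6$, the components $H_6,E_1,E_2,E_4,E_5$ are contained in $\tilde{f}^{-1}(\infty)$ and hence vertical, which forces $E_3$ and $E_6$ to be the two horizontal components of $\sigma^{-1}(H)$. The fiber-wise isomorphisms of part~(a) pass to generic fibers and identify $S_\eta$ with the generic cubic in the pencil; under this identification, $E_3\cap S_\eta$ maps isomorphically onto the generic-fiber copy of the conic $C$ (since $\sigma(E_3)=C$ by the construction), and $E_6\cap S_\eta$ onto the generic-fiber copy of the line $L$ (since $\sigma(E_6)=L$). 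Adjunction on the smooth cubic surface $S_\eta$, using that $C_\eta$ is a conic (degree $2$ in the anticanonical embedding) and $L_\eta$ is a line (degree $1$), then yields $C_\eta^2=0$ and $L_\eta^2=-1$.

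The main obstacle is the fiber-wise isomorphism in part~(a): at each of the six steps one has to confirm that the center $Z_i$ really lies in the base locus of $\mathcal{L}_{i-1}$, so that the Cartier-divisor argument applies uniformly across the whole pencil. This is precisely where the inductive base-locus computation carried out in Step~1--Step~2 does the real work; once it is in place, the remaining verifications for both parts are essentially bookkeeping.
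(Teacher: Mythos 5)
Your verification is correct and takes essentially the same route as the paper, which presents this lemma simply as a summary of the explicit blow-up construction of Steps 1--2: the Cartier-divisor argument for condition (c), the reading-off of the vertical components from the class $3H_6+2E_1+E_2+2E_4+E_5$, and the adjunction computation $C_{\eta}^{2}=0$, $L_{\eta}^{2}=-1$ on the cubic generic fiber are exactly the intended justifications. One small point to tighten: a special member $S_t$ of the pencil need not be globally smooth, but what your argument actually requires (and what holds, since $\nabla(F_S+th^3)=\nabla F_S$ along $H=\{h=0\}$) is that every member $S_t$, $t\neq\infty$, is smooth along the base locus $C\cup L$, so that each successive center is Cartier on each proper transform.
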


\subsubsection{Existence of a suitable relative MMP }

To complete the proof of Proposition \ref{prop:dp4-cylinder}, it
remains to check that $E_{6}$ can be contracted at a step of a MMP
$\varphi:\tilde{\mathbb{P}}\dashrightarrow\tilde{\mathbb{P}}'$ relative
to $\tilde{f}:\tilde{\mathbb{P}}\rightarrow\mathbb{P}^{1}$. For this,
it suffices to show that the restriction $\sigma_{6}\mid_{E_{6}}:E_{6}\rightarrow L_{5}$
is isomorphic to the trivial $\mathbb{P}^{1}$-bundle $\mathrm{pr}_{2}:E_{6}\simeq\mathbb{P}^{1}\times\mathbb{P}^{1}\rightarrow L_{5}\simeq\mathbb{P}^{1}$
and that the class of a fiber of the first projection generates an
extremal ray $R$ of the closure $\overline{NE}(\tilde{\mathbb{P}}/\mathbb{P}^{1})$
of the relative cone of curves of $\tilde{\mathbb{P}}$ over $\mathbb{P}^{1}$.
Indeed, if so, the contraction $\varphi:\tilde{\mathbb{P}}\rightarrow\tilde{\mathbb{P}}_{1}$
associated to this extremal ray is the first step of a MMP relative
to the morphism $\tilde{f}:\tilde{\mathbb{P}}\rightarrow\mathbb{P}^{1}$
consisting of the divisorial contraction of $E_{6}$ onto a smooth
curve isomorphic to $\mathbb{P}^{1}$. The existence of $R$ is an
immediate consequence of the following lemma which completes the proof
of Proposition \ref{prop:dp4-cylinder}. 
\begin{lem}
The pair $(E_{6},\mathcal{N}_{E_{6}/\tilde{\mathbb{P}}})$ is isomorphic
to $(\mathbb{P}^{1}\times\mathbb{P}^{1},\mathcal{O}_{\mathbb{P}^{1}\times\mathbb{P}^{1}}(-1,-1))$. \end{lem}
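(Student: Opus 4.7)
The plan is to determine $\mathcal{N}_{L_5/\mathbb{P}_5}$ explicitly as a direct sum of line bundles on $L_5\simeq\mathbb{P}^1$. Once this is done, the standard blow-up identities $E_6\simeq\mathbb{P}(\mathcal{N}_{L_5/\mathbb{P}_5})$ and $\mathcal{N}_{E_6/\tilde{\mathbb{P}}}\simeq\mathcal{O}_{E_6}(-1)$, combined with the twist formulas $\mathbb{P}(\mathcal{E}\otimes\mathcal{M})=\mathbb{P}(\mathcal{E})$ and $\mathcal{O}_{\mathbb{P}(\mathcal{E}\otimes\mathcal{M})}(1)=\mathcal{O}_{\mathbb{P}(\mathcal{E})}(1)\otimes\pi^{*}\mathcal{M}$, will yield $(E_6,\mathcal{N}_{E_6/\tilde{\mathbb{P}}})\simeq(\mathbb{P}^1\times\mathbb{P}^1,\mathcal{O}(-1,-1))$ as soon as $\mathcal{N}_{L_5/\mathbb{P}_5}\simeq\mathcal{O}(-1)^{\oplus 2}$ is established.

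The key input is that, for $i=4,5$, $L_i=S_i\cap E_i$ is the transverse intersection of two smooth divisors in $\mathbb{P}_i$, which provides the splitting
\[
\mathcal{N}_{L_i/\mathbb{P}_i}\simeq\mathcal{N}_{L_i/S_i}\oplus\mathcal{N}_{L_i/E_i}.
\]
The first summand is $\mathcal{O}(-1)$ because $\sigma_1\circ\cdots\circ\sigma_i$ restricts to an isomorphism $S_i\stackrel{\sim}{\to}S$ sending $L_i$ to the $(-1)$-curve $L$ on the cubic. The second summand is obtained by identifying the Hirzebruch structure of $E_i\to L_{i-1}$ and locating $L_i$ as a specific section.

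The Hirzebruch structures are determined iteratively by running the analogous transverse-intersection argument at each level. At the base case in $\mathbb{P}_3$, the curve $L_3$ arises as the transverse intersection $H_3\cap S_3$, where $H_3$ is the proper transform of the hyperplane. The surface $H_3$ is obtained from $\mathbb{P}^2$ by successively blowing up four points lying on the proper transform of $L$: the two points of $L\cap C$ (first met by $E_2$ at the step $\sigma_2$) and the two further points where $C_2$ meets $H_2$ (met by $E_3$ at $\sigma_3$). Therefore $L_3^2=-3$ on $H_3$, giving $\mathcal{N}_{L_3/H_3}\simeq\mathcal{O}(-3)$, and combined with $\mathcal{N}_{L_3/S_3}\simeq\mathcal{O}(-1)$ one obtains $\mathcal{N}_{L_3/\mathbb{P}_3}\simeq\mathcal{O}(-1)\oplus\mathcal{O}(-3)$. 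Consequently $E_4\simeq\mathbb{F}_2$, and the section $L_4=S_4\cap E_4$ corresponds to the sub-line bundle $\mathcal{N}_{L_3/S_3}\simeq\mathcal{O}(-1)$ of $\mathcal{N}_{L_3/\mathbb{P}_3}$, i.e.~the $(-2)$-section of $\mathbb{F}_2$. Thus $\mathcal{N}_{L_4/E_4}\simeq\mathcal{O}(-2)$ and $\mathcal{N}_{L_4/\mathbb{P}_4}\simeq\mathcal{O}(-1)\oplus\mathcal{O}(-2)$. Iterating once more gives $E_5\simeq\mathbb{F}_1$ with $L_5$ as its $(-1)$-section, so $\mathcal{N}_{L_5/E_5}\simeq\mathcal{O}(-1)$ and finally $\mathcal{N}_{L_5/\mathbb{P}_5}\simeq\mathcal{O}(-1)^{\oplus 2}$.

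The main obstacle is the accurate bookkeeping required at each step: one must verify that $L_i$ really is the transverse intersection $S_i\cap E_i$, that under the identification $E_i\simeq\mathbb{P}(\mathcal{N}_{L_{i-1}/\mathbb{P}_{i-1}})$ the section cut out by $S_i$ corresponds to the sub-line bundle $\mathcal{N}_{L_{i-1}/S_{i-1}}$, and that the self-intersection of that section is the correct difference of degrees of the two summands. Once this tracking is carried out correctly, the iteration terminates cleanly at the desired balanced decomposition.
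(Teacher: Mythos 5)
Your computation is correct, and it reaches $\mathcal{N}_{L_{5}/\mathbb{P}_{5}}\simeq\mathcal{O}_{\mathbb{P}^{1}}(-1)^{\oplus2}$ by a genuinely different route than the paper. The paper never looks at the surfaces $H_{i}$ nor at the Hirzebruch structure of $E_{4},E_{5}$: it tracks the single divisor class $\mathcal{N}_{S_{i}/\mathbb{P}_{i}}\simeq\mathcal{O}_{S_{i}}((3-i)C_{i}+3L_{i})$ (coming from $\sigma_{i}^{*}S_{i-1}=S_{i}+E_{i}$ and $S_{i}\simeq S$), restricts it to $L_{5}$ to get the quotient $\mathcal{O}_{\mathbb{P}^{1}}(-1)$ in the normal bundle sequence of $L_{5}\subset S_{5}\subset\mathbb{P}_{5}$, splits that sequence by the vanishing of $\mathrm{Ext}^{1}(\mathcal{O}_{\mathbb{P}^{1}}(-1),\mathcal{O}_{\mathbb{P}^{1}}(-1))$, and then pins down the type $(-1,-1)$ of $\mathcal{N}_{E_{6}/\tilde{\mathbb{P}}}$ by adjunction, via $K_{E_{6}}=(K_{\mathbb{P}_{6}}+E_{6})|_{E_{6}}$ and $K_{\mathbb{P}_{5}}\cdot L_{5}=0$, which sidesteps any tautological-bundle convention. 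Your route instead computes $\mathcal{N}_{L_{i}/\mathbb{P}_{i}}$ for $i=3,4,5$ by the transverse-intersection splitting and the geometry of the exceptional surfaces; your intermediate values $\mathcal{O}(-1)\oplus\mathcal{O}(-3)$, $\mathcal{O}(-1)\oplus\mathcal{O}(-2)$, $\mathcal{O}(-1)^{\oplus2}$, the identification of $L_{4}$ and $L_{5}$ with the negative sections of $\mathbb{F}_{2}$ and $\mathbb{F}_{1}$ (the subbundle $\mathcal{N}_{L_{i-1}/S_{i-1}}\simeq\mathcal{O}(-1)$ forces the quotient, hence the self-intersection), and the final twist computation giving $\mathcal{O}_{\mathbb{P}^{1}\times\mathbb{P}^{1}}(-1,-1)$ are all correct.

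The one soft spot is your base case, which is asserted rather than proved: that $S_{3}$ and $H_{3}$ meet transversally exactly along $L_{3}$, and that $H_{3}$ is $\mathbb{P}^{2}$ blown up in four points lying on the successive proper transforms of $L$. These statements are true, but checking them requires the local analysis at the two points of $L\cap C$, where $H$ and $S$ are tangent (this tangency is precisely why $C_{1}$ meets $H_{1}$, and then $C_{2}$ meets $H_{2}$, over those points), and as phrased they presuppose that $L$ meets $C$ at two distinct points; if $L$ is tangent to $C$ the centers induced on $H_{1},H_{2}$ become non-reduced and the four-point description must be redone, whereas the paper's computation is insensitive to the configuration. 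Note that within your own framework you can bypass the whole analysis of $H_{3}$: transversality of $S_{3}\cap H_{3}$ is not needed if you replace $\mathcal{N}_{L_{3}/H_{3}}$ by the quotient $\mathcal{N}_{L_{3}/\mathbb{P}_{3}}/\mathcal{N}_{L_{3}/S_{3}}\simeq\mathcal{N}_{S_{3}/\mathbb{P}_{3}}|_{L_{3}}\simeq\mathcal{O}_{S_{3}}(3L_{3})|_{L_{3}}\simeq\mathcal{O}_{\mathbb{P}^{1}}(-3)$, which follows from $\sigma_{i}^{*}S_{i-1}=S_{i}+E_{i}$ alone and settles the base case uniformly.
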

\begin{proof}
The normal bundle $\mathcal{N}_{S/\mathbb{P}^{3}}$ of $S$ in $\mathbb{P}^{3}$
is isomorphic to $\mathcal{O}_{S}(3(C+L))$, and by construction of
the resolution $\sigma:\tilde{\mathbb{P}}=\mathbb{P}_{6}\rightarrow\mathbb{P}^{3}$,
it follows that $\mathcal{N}_{S_{i}/\mathbb{P}_{i}}\simeq\mathcal{O}_{S_{i}}((3-i)C_{i}+3L_{i})$,
$i=1,2,3$, $\mathcal{N}_{S_{4}/\mathbb{P}_{4}}=\mathcal{O}_{S_{4}}(2L_{4})$
and $\mathcal{N}_{S_{5}/\mathbb{P}_{5}}=\mathcal{O}_{S_{5}}(L_{5})$.
Since $L_{5}$ is a line in the cubic surface $S_{5}$, we have $\mathcal{N}_{L_{5}/S_{5}}\simeq\mathcal{O}_{\mathbb{P}^{1}}(-1)$
and $\mathcal{N}_{S_{5}/\mathbb{P}_{5}}\mid_{L_{5}}\simeq\mathcal{O}_{\mathbb{P}^{1}}(L_{5}^{2})\simeq\mathcal{O}_{\mathbb{P}^{1}}(-1)$,
where $L_{5}^{2}$ denotes the self-intersection of $L_{5}$ on the
surface $S_{5}$. It then follows from the exact sequence 
\[
0\rightarrow\mathcal{N}_{L_{5}/S_{5}}\rightarrow\mathcal{N}_{L_{5}/\mathbb{P}_{5}}\rightarrow\mathcal{N}_{S_{5}/\mathbb{P}_{5}}\mid_{L_{5}}\rightarrow0
\]
and the vanishing of $\mathrm{Ext}^{1}(\mathcal{O}_{\mathbb{P}^{1}}(-1),\mathcal{O}_{\mathbb{P}^{1}}(-1))\simeq H^{1}(\mathbb{P}^{1},\mathcal{O}_{\mathbb{P}^{1}})$,
that $\mathcal{N}_{L_{5}/P_{5}}\simeq\mathcal{O}_{\mathbb{P}^{1}}(-1)\oplus\mathcal{O}_{\mathbb{P}^{1}}(-1)$.
Thus $E_{6}\simeq\mathbb{P}(\mathcal{N}_{L_{5}/\mathbb{P}_{5}})$
is isomorphic to $\mathbb{P}^{1}\times\mathbb{P}^{1}$. Since $K_{\mathbb{P}^{1}\times\mathbb{P}^{1}}$
is of type $(-2,-2)$ in the Picard group of $\mathbb{P}^{1}\times\mathbb{P}^{1}$,
it is enough to show that $K_{E_{6}}=2E_{6}\mid_{E_{6}}$. Let $L_{0}$
be a fiber of the restriction of $\sigma_{6}\mid_{E_{6}}:E_{6}\rightarrow L_{5}$
of $\sigma_{6}$. Since $K_{\mathbb{P}_{6}}=\sigma_{6}^{*}K_{\mathbb{P}_{5}}+E_{6}$,
it follows from the adjunction formula that 
\[
K_{E_{6}}=(K_{\mathbb{P}_{6}}+E_{6})\mid_{E_{6}}=(K_{\mathbb{P}_{5}}\cdot L_{5})L_{0}+2E_{6}\mid_{E_{6}}.
\]
On the other hand, since $L_{5}$ is a $(-1)$-curve on $S_{5}$ and
$\mathcal{N}_{S_{5}/\mathbb{P}_{5}}\mid_{L_{5}}\simeq\mathcal{O}_{\mathbb{P}^{1}}(-1)$,
we have 
\[
1=-K_{S_{5}}\cdot L_{5}=(-(K_{\mathbb{P}_{5}}+S_{5})\mid_{S_{5}}\cdot L_{5})=-K_{\mathbb{P}_{5}}\cdot L_{5}+1.
\]
Thus $K_{\mathbb{P}_{5}}\cdot L_{5}=0$, and hence $K_{E_{6}}=2E_{6}\mid_{E_{6}}$
as desired.  
\end{proof}
\bibliographystyle{amsplain}

\end{document}